\pdfoutput=1
\documentclass[11pt]{amsart}

\usepackage{graphicx}
\usepackage[english]{babel}
\usepackage[T1]{fontenc}
\usepackage[latin1]{inputenc}
\usepackage{amsfonts}
\usepackage{amssymb}
\usepackage{amsthm}
\usepackage{amsmath}
\usepackage{tikz}
\usepackage{float}
\usepackage{enumerate}
\usepackage{accents}
\usepackage{mathtools}
\usepackage{mathrsfs}
\usepackage{comment}
\usepackage{afterpage}
\usepackage{bbm}
\usepackage{dsfont}
\usepackage{stmaryrd}
\usepackage{hyperref}
\usepackage{mleftright}
\usepackage{subfig}
\usepackage{soul}
\usepackage{tikz-cd} 
\usepackage{fullpage}
\usepackage{cleveref}

\numberwithin{equation}{section}

\theoremstyle{plain}
\newtheorem{thm}{Theorem}[section]
\newtheorem*{thm*}{Theorem}
\newtheorem{prop}[thm]{Proposition}
\newtheorem*{prop*}{Proposition}
\newtheorem{cor}[thm]{Corollary}
\newtheorem*{cor*}{Corollary}
\newtheorem{lem}[thm]{Lemma}

\newtheorem{thmintro}{Theorem}

\newtheorem{corintro}[thmintro]{Corollary}

\theoremstyle{definition}
\newtheorem{defn}[thm]{Definition}
\newtheorem*{defn*}{Definition}

\newtheorem{rmk}[thm]{Remark}
\newtheorem*{rmk*}{Remarks}

\newtheorem*{conj*}{Conjecture}
\newtheorem*{quest*}{Question}

\newtheoremstyle{blue-environment}{}{}{}{}{\color{blue}\bfseries}{.}{ }{}
\theoremstyle{blue-environment}

\newcommand{\acts}{\curvearrowright}
\newcommand{\ra}{\rightarrow}
\newcommand{\Ra}{\Rightarrow}

\newcommand{\sq}{\subseteq}

\newcommand{\x}{\times}
\renewcommand{\o}{\circ}

\newcommand{\mc}{\mathcal}
\newcommand{\mf}{\mathfrak}
\newcommand{\mscr}{\mathscr}

\newcommand{\R}{\mathbb{R}}
\newcommand{\Z}{\mathbb{Z}}
\newcommand{\N}{\mathbb{N}}

\newcommand{\Om}{\Omega}

\newcommand{\g}{\gamma}
\newcommand{\G}{\Gamma}

\newcommand{\X}{\mc{X}}

\DeclareMathOperator{\Fix}{Fix}
\DeclareMathOperator{\Min}{Min}

\begin{document}

\title{$\R$--trees and accessibility over arc-stabilisers} 

\author[E.\,Fioravanti]{Elia Fioravanti}\address{Institute of Algebra and Geometry, Karlsruhe Institute of Technology}\email{elia.fioravanti@kit.edu} 
\thanks{The author is supported by Emmy Noether grant 515507199 and grant 541703614 of the DFG}

\begin{abstract}
    Let $G\acts T$ be a minimal action on an $\R$--tree with $G$ finitely presented. Assuming that $G$ is accessible over the family of arc-stabilisers of $T$, and that arc-stabilisers are finitely generated, we give a description of the point-stabilisers of $T$ in terms of simplicial splittings of $G$. The most important consequence is that the point-stabilisers of $G\acts T$ are finitely generated. This has applications to the study of automorphisms of right-angled Artin groups and special groups.
\end{abstract}

\maketitle

\section{Introduction}

When a finitely generated group $G$ acts on a simplicial tree $T$, the properties of edge-stabilisers can usually be translated rather precisely into properties of vertex-stabilisers. Many classical instances of this phenomenon are known, regarding e.g.\ finite generation and finite presentability, type $F_n$ \cite{HW21}, quasi-convexity \cite[Proposition~1.2]{Bow-JSJ}, and undistortion \cite[Lemma~2.15]{FK22}.

A more delicate problem is to deduce information about the point-stabilisers of a $G$--action on an $\R$--tree $T$, assuming that analogous information is known for arc-stabilisers. To some extent, one can use Rips--Sela theory to approximate the $\R$--tree by $G$--actions on simplicial trees \cite{Rips-Sela,GLP1,BF-stable,Guir-CMH}, but such approximations typically only capture information about \emph{finitely generated} subgroups of point-stabilisers, and this is not the full picture a priori. When the $\R$--tree has small arc-stabilisers, very precise descriptions of point-stabilisers are known \cite{Gaboriau-Levitt,GJLL,Guirardel-complexity,Horbez17}, but this does not suffice for all applications. For instance, the properties of automorphisms of many non-hyperbolic groups are naturally encoded in actions on \emph{non-small} $\R$--trees \cite{Fio10e,Fio10a}.

When working with non-small $\R$--trees, more information about the group $G$ is needed. Roughly, it is important to know that, as we approximate the $\R$--tree by simplicial trees, the latter cannot get arbitrarily complicated. The relevant concept here is accessibility: A group $G$ is \emph{accessible} over a family of subgroups $\mc{F}$ if there is a uniform upper bound $N=N(G)$ on the number of edge-orbits of reduced simplicial $G$--trees $T$ with edge-stabilisers in the family $\mc{F}$. For instance, finitely presented groups are accessible over finite \cite{Dunwoody-acc} and even small subgroups \cite{BF-complexity}. Accessibility is also known for uniformly acylindrical splittings with arbitrary edge-stabilisers \cite{Sela-acyl-acc,Weidmann-acc}.

Our main result is the following. We explain terminology and notation right after. We also refer to \Cref{thm:acc_implies_nice} below for a more precise result.

\begin{thmintro}\label{thmintro}
    Let $G$ be finitely presented and torsion-free. Let $G\acts T$ be a minimal $\R$--tree, and let $\mc{F}$ be the family of $G$--stabilisers of arcs of $T$. Suppose that all the following are satisfied.
    \begin{enumerate}
        \item[(i)] The group $G$ is accessible over $\mc{F}_{\rm int}\cup{\rm Ess}(\mc{F},\emptyset)$.
        \item[(ii)] All elements of $\mc{F}_{\rm int}$ are finitely generated and root-closed in $G$.
        \item[(iii)] Chains of subgroups in $\mc{F}_{\rm int}\cup{\rm Ess}(\mc{F},\emptyset)$ have bounded length.
    \end{enumerate}
    Then all of the following hold.
    \begin{enumerate}
        \item The point-stabilisers of $T$ are finitely generated.
        \item There are only finitely many $G$--orbits of branch points and of directions based at branch points. In particular, there are only finitely many $G$--orbits of points $p\in T$ such that $G_p\not\in\mc{F}$.
        \item If a subgroup $H\leq G$ does not preserve a point or line in $T$, then $H$ is non-elliptic in a $(\mc{F}_{\rm int}\cup{\rm Ess}(\mc{F},\mscr{E}),\mscr{E})$--splitting of $G$, where $\mscr{E}$ is the family of subgroups of $G$ elliptic in $T$.
    \end{enumerate}
\end{thmintro}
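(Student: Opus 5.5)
The approach is to approximate $G\acts T$ by simplicial trees and use accessibility to stop the approximation process after boundedly many steps. Since $G$ is finitely presented, by the standard Rips--Sela/Guirardel machinery (resolutions, or a direct limit of simplicial trees) we can write $T$ as a strong limit of a sequence of $G$--trees $T_0\to T_1\to\dots\to T$. Each $T_k$ is built from a finite $2$--complex (a band complex/foliated complex) resolving $T$. After splitting off the simplicial part, we obtain a graph-of-actions decomposition of $T_k$. Its vertex actions are either simplicial, or of Seifert/axial/exotic type. Its edge groups lie in $\mc{F}$, and more precisely in $\mc{F}_{\rm int}$ (intersections of arc-stabilisers) or in ${\rm Ess}(\mc{F},\emptyset)$ (the essential subgroups that show up as edge groups of the dual simplicial splittings). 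Collapsing the non-simplicial parts yields, for each $k$, a reduced simplicial $G$--tree $S_k$ with edge-stabilisers in $\mc{F}_{\rm int}\cup{\rm Ess}(\mc{F},\emptyset)$.

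The key steps, in order, are as follows.

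First, I would show that the splittings $S_k$ are compatible and refine one another as $k$ grows: each $S_{k+1}$ dominates $S_k$ up to collapse.

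Second, hypothesis (i) bounds the number of edge-orbits of the $S_k$, and hypothesis (iii) bounds how many times an edge group can strictly shrink along a chain. Combining the two, the sequence $S_k$ stabilises up to $G$--equivariant isomorphism.

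Third, once the simplicial approximation stabilises, I would argue that the vertex groups of $S_k$ act on their minimal subtrees of $T$ as actions of a controlled type. These are either simplicial, or indecomposable with arc-stabilisers in $\mc{F}_{\rm int}$. The Rips machine (Guirardel's version for non-small trees, using that the arc-stabilisers in $\mc{F}_{\rm int}$ are finitely generated and root-closed by (ii)) then identifies them as surface/axial pieces. In these pieces the point-stabilisers are either arc-stabilisers or finitely generated peripheral-type subgroups.

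Fourth, the conclusions are read off.
1. Vertex groups of a finite splitting of a finitely presented group with finitely generated edge groups are finitely generated. Point-stabilisers of $T$ are then vertex groups of the stabilised $S_k$, point-stabilisers inside surface/axial pieces, or elements of $\mc{F}_{\rm int}$, and all of these are finitely generated, giving~(1).
2. Finiteness of the orbits of branch points and directions follows from finiteness of the graph of actions, together with the finitely many singular orbits in each Seifert/axial piece, giving~(2).
3. For (3), take $H$ that fixes no point and no line. Then either $H$ is non-elliptic in the simplicial part, or it acts non-trivially in an exotic/surface piece. In the latter case I would use the dual splitting of that piece (curves on the orbifold), relative to $\mscr{E}$, which has edge groups in ${\rm Ess}(\mc{F},\mscr{E})$.

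I expect the main obstacle to be the second step, proving that the approximating splittings genuinely stabilise. Accessibility bounds only the number of edge-orbits, not the complexity of the vertex groups, so one needs a complexity that decreases strictly whenever $T_k\to T_{k+1}$ is not an isomorphism on the simplicial part. My first attempt would be the lexicographic pair (number of edge-orbits, multiset of chain-lengths of edge groups in $\mc{F}_{\rm int}\cup{\rm Ess}(\mc{F},\emptyset)$), which is bounded by (i) and (iii). I would then check that each non-trivial folding step either adds an edge-orbit or replaces an edge group by a proper subgroup in the family. The root-closure assumption in (ii) should be what rules out folds that change an edge group to a finite-index overgroup.
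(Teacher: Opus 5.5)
There is a genuine gap, and it is at the theorem's main point: point-stabilisers of $T$ that are not known in advance to be finitely generated. Your splittings $S_k$ are only relative to $\mscr{E}_k$, the subgroups elliptic in the $k$-th approximation. A point-stabiliser $G_p$ of $T$ may a priori be a strictly increasing union of point-stabilisers of the approximations, and then it is elliptic in none of them.

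So your fourth step (``point-stabilisers of $T$ are vertex groups of the stabilised $S_k$'') is unsupported in two ways. Nothing makes $G_p$ elliptic in $S_k$. And nothing forces a vertex group containing $G_p$ to equal it; your third step even allows vertex groups outside the indecomposable pieces to act on $T$ with non-trivial simplicial minimal subtree, whose point-stabilisers are again uncontrolled. The same problem affects part~(3), which needs a splitting relative to \emph{all} of $\mscr{E}$.

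The paper closes this with a maximality argument that is missing from your proposal.
\begin{enumerate}
\item Consider splittings relative to $\mscr{E}_k$ that have a vertex $x_I$ with stabiliser $I$ for each $I\in{\rm Ind}$, edges at $x_I$ with stabilisers in $\partial I$, and all other edges in $\mc{F}_{\rm int}$. Choose one, $\Delta_k$, that dominates all the others.
\item Unconditional accessibility shows that the \emph{deformation spaces} of the $\Delta_k$ stabilise. The trees themselves are not shown to stabilise up to isomorphism, and this is not needed.
\item Maximality forces every vertex group $G_x$ of $\Delta_k$ outside ${\rm Ind}$ to be elliptic in $T$. Otherwise a loxodromic element of $G_x$ has an axis that either contains a lifted stable arc, or leaves an indecomposable component $U$ through an edge $e$ with $G_x\cap G_e\le K_U$. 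Either way one gets a further $\mc{F}_{\rm int}$-splitting relative to $\mscr{E}_n$ refining $\Delta_k$, a contradiction.
\item \Cref{lem:FK}, with bounded chains, upgrades ``every finitely generated subgroup of $E\in\mscr{E}$ is elliptic'' to ``$E$ is elliptic''.
\end{enumerate}
Only then, for a vertex $x$ outside ${\rm Ind}$, does one get $G_p\le G_x\le G_q$. Hence either $G_p\in\mc{F}$, or $G_p=G_x$, which is finitely generated.

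Your second step also fails as written. The edge groups of the graph-of-actions splittings of the approximations are \emph{not} in $\mc{F}_{\rm int}\cup{\rm Ess}(\mc{F},\emptyset)$. They include stabilisers of attaching points of indecomposable components: peripheral extensions $K_U\rtimes\langle c\rangle$ for surface pieces, and preimages of free factors for exotic pieces. They also include stabilisers of simplicial arcs of the approximation, which are in general proper subgroups of arc-stabilisers of $T$. So neither (i) nor (iii) bounds them, and a complexity built on chain lengths in that family is not available. Moreover, the maps between approximations are morphisms of $\R$-trees, not simplicial folds of $S_k$.

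The monotonicity claimed in your first step is also reversed. Since $\mscr{E}_k$ grows with $k$, the maximal splitting at stage $k$ dominates the one at stage $k+1$, so the natural sequence coarsens rather than refines.

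In the paper, accessibility is applied instead to auxiliary splittings. These are obtained by blowing up surface and exotic vertices of the dual tree along curves or free splittings, and collapsing the rest. This bounds the number of conjugacy classes of stable-arc stabilisers and the number of orbits of saturated indecomposable components. Guirardel's stabilisation result (\cite[Theorem~A.11]{Guir-Fourier}) then shows that the indecomposable components eventually stop changing, so ${\rm Ind}$ consists of finitely many conjugacy classes. Your proposal omits this step as well.
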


We briefly explain here the terminology and notation used above. Given a family $\mc{F}$ of subgroups of $G$, we denote by $\mc{F}_{\rm int}$ the family of finite intersections of subgroups in $\mc{F}$. A subgroup $H\leq G$ is \emph{root-closed} if, whenever $g^n\in H$ for some $g\in G$ and $n\geq 2$, we actually have $g\in H$. By a \emph{splitting} of a group $G$, we refer to a minimal action on a simplicial tree $S$ with at least one edge and no edge-inversions. Following \cite{GL-JSJ}, we speak of an $(\mc{F},\mc{H})$--splitting, for families of subgroups $\mc{F}$ and $\mc{H}$, if all subgroups in $\mc{H}$ are elliptic in $S$, and if the $G$--stabiliser of each edge of $S$ lies in $\mc{F}$. Finally, the family ${\rm Ess}(\mc{F},\mc{H})$ consists of the subgroups of $G$ of the form $F\rtimes\langle\gamma\rangle$, where there exists a quadratically hanging vertex group $Q\leq G$ with fibre $F\in\mc{F}$ in some $(\mc{F},\mc{H})$--splitting of $G$, and where $\gamma\in Q/F$ corresponds to an essential simple closed curve on the associated surface. A \emph{branch point} in an $\R$--tree $T$ is a point $p$ such that $T\setminus\{p\}$ has at least $3$ connected components. The \emph{directions} at $p$ are the components of $T\setminus\{p\}$, or equivalently the arc germs based at $p$.

Several assumptions in the statement of \Cref{thmintro} are meant to (significantly) simplify its proof, but they are not essential for the result to hold. This is the case for finite presentability of $G$ (rather than mere finite generation), lack of torsion in $G$, and the related requirement that arc-stabilisers be root-closed.

An important application of \Cref{thmintro} is to the case when $G$ is a (compact) special group in the sense of Haglund and Wise \cite{HW08}. That is, $G$ is the fundamental group of a compact special cube complex. Examples include all right-angled Artin groups. A suitable application of the Bestvina--Paulin construction \cite{Bes88,Pau91} shows that automorphisms of special groups naturally give rise to $G$--actions on $\R$--trees whose arc-stabilisers are centralisers \cite{Fio10e}.

Denote by $\mc{VP}(G)$ the family of \emph{virtual products} within $G$, namely the subgroups that virtually split as the direct product of two infinite groups. We also denote by $\mc{Z}(G)$ the family of \emph{centralisers}, that is, the family of subgroups of the form $Z_G(A)$ for a subset $A\sq G$, where we write $Z_G(A):=\{g\in G\mid ga=ag,\ \forall a\in A\}$. We will show in \cite[Section~3]{Fio11} that all special groups are accessible over centralisers, so that this assumption can in fact be removed from the next corollary. 

\begin{corintro}\label{corintro}
    Let $G$ be a special group. Suppose that $G$ is accessible over $\mc{Z}(G)$. Let $G\acts T$ be a minimal $\R$--tree with arc-stabilisers in $\mc{Z}(G)$. Let $\mscr{E}$ be the family of elliptic subgroups of $G$.
    \begin{enumerate}
        \item For every point $p\in T$, the stabiliser $G_p$ is convex-cocompact in $G$ (and hence itself special).
        \item There are only finitely many $G$--conjugacy classes of point-stabilisers of $T$.
        \item If a subgroup $H\leq G$ is non-elliptic in $T$ while all elements of $\mc{VP}(G)$ are elliptic in $T$, then $H$ is non-elliptic in a $(\mc{Z}(G),\mscr{E})$--splitting of $G$.
    \end{enumerate}
\end{corintro}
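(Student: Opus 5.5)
We will deduce the corollary from \Cref{thmintro}, applied with $\mc{F}$ equal to the family of arc-stabilisers of $T$, which is contained in $\mc{Z}(G)$. The first step is to check hypotheses (i)--(iii) for this $\mc{F}$.

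\textbf{Hypothesis (ii).} Since $\mc{Z}(G)$ is closed under intersections ($Z_G(A)\cap Z_G(B)=Z_G(A\cup B)$), we have $\mc{F}_{\rm int}\sq\mc{Z}(G)$. Special groups are torsion-free. Their centralisers are convex-cocompact, hence finitely generated. They are also root-closed: this follows from uniqueness of roots in special groups, since $g^n$ commuting with $A$ forces $g$ to commute with $A$, which can be seen by embedding $G$ in a RAAG.

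\textbf{Hypothesis (iii).} Chains of centralisers have bounded length, because centralisers in special groups are convex-cocompact and there are finitely many of them up to conjugacy and commensurability, or by a standard dimension argument.

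\textbf{The family ${\rm Ess}(\mc{F},\emptyset)$.} This is the main obstacle. Its elements $F\rtimes\langle\gamma\rangle$ are not a priori centralisers. I would argue that they are centralisers in $G$, or at least that $G$ remains accessible over $\mc{Z}(G)\cup{\rm Ess}$. The idea uses a quadratically hanging vertex $Q$ with fibre $F\in\mc{Z}(G)$.
\begin{enumerate}
\item In a special group, $F$ is normalised by $Q$, and $Q/F$ is a surface group.
\item Take a lift $\tilde\gamma$ of $\gamma$. Then $F\rtimes\langle\gamma\rangle$ is contained in the normaliser of $F$.
\item In a special group, the normaliser of a centraliser $Z_G(A)$ virtually splits as $Z_G(A)\times Z_G(Z_G(A))$.
\item Because $Q/F$ is a surface group with trivial centre, this product structure should force $F\rtimes\langle\gamma\rangle$ to equal $F\times\langle\gamma'\rangle$, with $\gamma'$ in the centraliser of $F$.
\item One then checks that $F\times\langle\gamma'\rangle$ equals $Z_G(B)$ for a suitable $B$, using root-closure and the product structure.
\end{enumerate}
Granting this, hypothesis (i) reduces to accessibility over $\mc{Z}(G)$, which is assumed.

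\textbf{Conclusions (1) and (2).} \Cref{thmintro}(1) gives finite generation of each $G_p$. To upgrade this to convex-cocompactness, note that $G_p$ is either an arc-stabiliser, hence a centraliser and convex-cocompact, or one of finitely many orbit representatives by \Cref{thmintro}(2). In the latter case I would use the more precise description in \Cref{thm:acc_implies_nice}: $G_p$ is a vertex group of a splitting of $G$ over centralisers, which are convex-cocompact. Vertex groups of such splittings are then convex-cocompact by a Bowditch-type argument for cubulated groups: edge groups convex-cocompact implies vertex groups convex-cocompact. For conclusion (2), point-stabilisers outside $\mc{F}$ fall into finitely many orbits. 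Stabilisers lying in $\mc{F}$ are centralisers, and special groups have finitely many conjugacy classes of centralisers, via hyperplane/parabolic structure.

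\textbf{Conclusion (3).} Apply \Cref{thmintro}(3). If $H$ preserves a line $\ell$ but no point, then the subgroup fixing $\ell$ pointwise is a centraliser $C$, and $H$ acts on $\ell$ through $\R$ or $D_\infty$. Torsion-freeness rules out the dihedral case, so $H$ normalises $C$ with abelian quotient. This should make $H$ lie in a virtual product, unless $C$ is trivial, in which case $H$ is cyclic. A virtual product is elliptic by hypothesis, contradicting non-ellipticity of $H$. The cyclic case must be handled separately, presumably by noting that a hyperbolic element lies in a $\mathbb{Z}^2$ or is handled directly. Finally, by the identification above, ${\rm Ess}(\mc{F},\mscr{E})$ lies in $\mc{Z}(G)$, so the splitting provided is a $(\mc{Z}(G),\mscr{E})$--splitting.
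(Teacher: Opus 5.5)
Your overall plan matches the paper's: verify the hypotheses using ${\rm Ess}(\mc{Z}(G),\emptyset)\subseteq\mc{Z}(G)$, then read off (1)--(3) from \Cref{thm:acc_implies_nice}. However, several steps fail. For (2), it is false that a special group has finitely many conjugacy classes of centralisers. In $F_2=\langle a,b\rangle$ the maximal cyclic subgroups $\langle ab^n\rangle$, $n\in\Z$, are pairwise non-conjugate centralisers. The same false claim is your first justification of (iii); bounded chains do hold, but for other reasons (\Cref{rmk:centraliser_chains}). The finiteness has to come from $T$ itself. By \Cref{thm:acc_implies_nice}(1), stabilisers of stable arcs form finitely many conjugacy classes. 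Bounded chains show that every arc-stabiliser $G_{[x,y]}$ equals $G_{\beta_x}\cap G_{\beta_y}$ for stable arcs $\beta_x,\beta_y$ near the two endpoints. Then \Cref{lem:cc_basics}(3) leaves only finitely many conjugacy classes of such intersections. For (1), the splitting in \Cref{thm:acc_implies_nice}(3b) is over $\mc{F}_{\rm int}\cup{\rm Per}(\mc{F},\mscr{E})$, not over centralisers. Full peripheral subgroups are in general neither centralisers nor convex-cocompact, so \Cref{prop:cc_edge->vertex} does not apply to it. Moreover, fake peripheral stabilisers are not covered by (3b) at all. The missing step is \Cref{rmk:again_cc_near_QH}. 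Pass to an index-$\leq 2$ subgroup $G_0$ making the surfaces orientable, split each QH vertex along two suitably chosen essential multicurves, and collapse the peripheral edges. This realises an index-$\leq 2$ subgroup of $G_p$ as $W_1\cap W_2$, where each $W_i$ is a vertex group of a splitting of $G_0$ with edge groups in (intersections with $G_0$ of) $\mc{F}_{\rm int}\cup{\rm Ess}(\mc{F},\mscr{E})\subseteq\mc{Z}(G)$. Only then do \Cref{prop:cc_edge->vertex} and \Cref{lem:cc_basics}(1) give convex-cocompactness.

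The inclusion ${\rm Ess}(\mc{Z}(G),\emptyset)\subseteq\mc{Z}(G)$, which you flag as the main obstacle, is also left unproved. Step~3 is false as stated: $Z\cap Z_G(Z)$ is the centre of $Z$, and for a maximal cyclic $Z\leq F_2$ one has $N_G(Z)=Z_G(Z)=Z$. Step~5 does not say which $B$ works, which is the whole point. The paper writes $E=F\rtimes\langle\gamma\rangle$ virtually as $F\times\langle p\rangle$. Here $p$ lies in the factor $P$ of the virtual splitting $F\times P$ of $N_G(F)$ and projects to a power of $\overline\gamma$. It then takes $B=\{p\}$. As $\overline\gamma$ is essential, hence not peripheral, $x$ is the only fixed point of $p$ in $S$. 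So $Z_G(p)\leq G_x$, and hence $Z_G(p)\leq E$, since its image in $\pi_1(\Sigma)$ centralises a power of $\overline\gamma$. Since $F\leq Z_G(p)$, the subgroup $Z_G(p)$ has finite index in $E$, and root-closure forces $E=Z_G(p)$. Finally, for (3) the introductory version of the theorem is too weak: it requires $H$ to preserve no line at all. This fails for any cyclic non-elliptic $H$, which preserves its axis. For example, take $H=\langle ab\rangle$ acting on the Bass--Serre tree of $F_2=\langle a\rangle*\langle b\rangle$, where $\mc{VP}(F_2)=\emptyset$ and $ab$ lies in no $\Z^2$. So the cyclic case you leave open is a genuine gap. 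Use \Cref{thm:acc_implies_nice}(4) instead, whose hypothesis only excludes lines on which their full $G$--stabiliser acts indiscretely. No such line $\alpha$ exists when $\mc{VP}(G)\subseteq\mscr{E}$. Indeed, the pointwise stabiliser $K_\alpha$ is an intersection of arc-stabilisers, hence a convex-cocompact centraliser. By \Cref{lem:cc_basics}(2), $G_\alpha$ is then virtually $K_\alpha\times A$ with $A$ free abelian of rank $\geq 2$, i.e.\ a non-elliptic element of $\mc{VP}(G)$. (Also, torsion-freeness alone does not rule out reflections in $H\acts\ell$, since a reflection only needs its square in the kernel; root-closure of the kernel does.)
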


Again, see \Cref{cor:acc_implies_nice} for a more precise result. A priori, the notion of convex-cocompactness in \Cref{corintro} depends on the choice of an embedding of $G$ into a right-angled Artin group (see \Cref{sub:cc}). However, \Cref{corintro} shows that the point-stabilisers $G_p$ are always convex-cocompact, regardless of the choice of such an embedding. Part of the reason is that centralisers are always convex-cocompact, again regardless of the choice of embedding.

\Cref{corintro} is a key ingredient in our following work \cite{Fio11}, where we describe growth rates of automorphisms of special groups $G$, including automorphisms of right-angled Artin and Coxeter groups. Roughly, the point-stabilisers of certain $\R$--trees with arc-stabilisers in $\mc{Z}(G)$ are related to the maximal subgroups of $G$ that grow at non-maximal speed under an automorphism of $G$.

We prove \Cref{thmintro} in \Cref{app:acc_R_trees}, and deduce \Cref{corintro} in \Cref{sect:special}.

\smallskip
{\bf Acknowledgements.} I am grateful to the referee for their careful reading and useful comments.

\section{Preliminaries} 

\subsection{Generalities}

Throughout the article, group actions on $\R$--trees are implicitly assumed to be by isometries, and all actions on simplicial trees are assumed to be without edge-inversions.

An action on an $\R$--tree $G\acts T$ is \emph{minimal} if there is no proper $G$--invariant subtree. If $G$ is finitely generated, then every $G$--action on an $\R$--tree without a global fixed point admits loxodromic elements, and so there exists a (unique) $G$--minimal subtree $\Min(G,T)\sq T$, namely the union of the axes of all loxodromics. The following observation is useful when dealing with subgroups of $G$ that are not finitely generated; we refer to \cite[Lemma~2.18]{Fioravanti-Kerr} for a proof.

\begin{lem}\label{lem:FK}
    Let $G\acts T$ be an $\R$--tree such that chains of arc-stabilisers have bounded length. Then, if $H\leq G$ is a subgroup, either $H$ fixes a point of $T$ or $H$ contains a loxodromic element.
\end{lem}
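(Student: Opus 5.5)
The plan is to argue by contradiction. Assume $H$ has no loxodromic element, i.e.\ every element of $H$ is elliptic in $T$, and show that $H$ then fixes a point. The first step is the standard fact that a finitely generated group acting on an $\R$--tree by elliptic isometries has a global fixed point (Serre's lemma for $\R$--trees). So every finitely generated subgroup $K\leq H$ has nonempty fixed set $\Fix(K)$, which is a closed subtree. Exhaust $H$ by an increasing chain $K_1\leq K_2\leq\dots$ of finitely generated subgroups; if $H$ is countable this is automatic, and in general one works with the directed system of finitely generated subgroups. This gives a decreasing family of nonempty subtrees $\Fix(K_n)$, and we want their intersection to be nonempty.

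The second step analyses how this family can fail to have a common point. If some $\Fix(K_n)$ contains a nondegenerate arc, consider the pointwise stabiliser of that arc. The arcs in $\Fix(K_n)\cap\Fix(K_m)$ have stabilisers containing $\langle K_n,K_m\rangle$. Whenever the fixed sets keep shrinking while still containing arcs, we get a strictly increasing chain of arc-stabilisers, and the bounded-length hypothesis stops this after finitely many steps. More precisely, suppose the intersection $\Fix(K_n)\cap\Fix(K_m)$ contains an arc $\alpha$ but $K_{m'}$ moves some point of $\alpha$. Then we can choose a sub-arc and pass to a strictly larger stabiliser. Iterating this produces a chain $G_{\alpha_1}\subsetneq G_{\alpha_2}\subsetneq\cdots$, which must terminate. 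So eventually either a single arc is fixed by all of $H$, and we are done, or the fixed sets $\Fix(K_n)$ are eventually degenerate, or become ``thin'' in the sense that they contain no arc common to all later ones.

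The third step treats the remaining case with a Helly/projection argument. Fix a basepoint $x$ and let $p_n$ be the projection of $x$ to the closed subtree $\Fix(K_n)$. The points $p_n$ lie along a geodesic ray or segment issuing from $x$, since nested subtrees give monotone projections. If $d(x,p_n)$ is bounded, the $p_n$ converge, because the segments $[x,p_n]$ are nested. One checks that the limit is fixed by every $K_n$: $\Fix(K_n)$ is closed and contains $p_m$ for all $m\geq n$. If instead $d(x,p_n)\to\infty$, the $p_n$ define an end of $T$ fixed by $H$, and each element of $H$ fixes a ray towards that end. In that case, for $n<m$, the segment $[p_n,p_m]$ is fixed by $K_n$. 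Pick $n_0$ and consider the nested arcs $[p_{n_0},p_m]$. The group $K_m$ fixes $[p_m,p_{m'}]$ for $m'>m$, so the stabilisers of the arcs $[p_m,p_{m+1}]$ form an increasing chain containing $K_m$. Since $H$ has no global fixed point, this chain cannot stabilise: if it did, a single arc stabiliser would contain all $K_m$, hence all of $H$. That gives an infinite strictly increasing chain of arc-stabilisers, contradicting the hypothesis.

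I expect the main obstacle to be this last step: making rigorous that the stabilisers of the arcs $[p_m,p_{m+1}]$ (or suitable sub-arcs) genuinely form an increasing chain. An element fixing $[p_m,p_{m+1}]$ need not fix later arcs, so I would instead take stabilisers of rays $[p_m,\xi)$ towards the fixed end $\xi$. These grow with $m$ and exhaust $H$. Each ray stabiliser equals the stabiliser of a long enough initial arc only up to the chain condition, so some care is needed to pass from ray stabilisers to honest arc-stabilisers before applying the bound. Uncountable $H$ should be handled by the same argument applied to the directed system, using the chain bound to extract a cofinal sequence.
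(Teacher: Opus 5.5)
The paper does not prove this lemma; it cites \cite[Lemma~2.18]{Fioravanti-Kerr}. Your overall route is the standard one and works: Serre's lemma for finitely generated subgroups, the nested fixed subtrees $\Fix(K_n)$ with projections $p_n$ of a basepoint, and then pointwise stabilisers of the part of the geodesic beyond $p_n$. Your last paragraph makes the right correction. The stabilisers of the consecutive arcs $[p_m,p_{m+1}]$ do not form a chain, but the stabilisers of $[p_m,\xi)$ do. The extra ``care'' you anticipate is immediate. For fixed $m$, the arc-stabilisers $G_{[p_m,p_{m'}]}$ decrease as $m'$ grows, so by the chain bound they take only finitely many values, and $G_{[p_m,\xi)}$ equals one of them. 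You should delete Step~2. Nothing later uses it, and as written it is not an argument: if $K_{m'}$ fixes at most one point of $\alpha$, no larger arc-stabiliser appears, and ``thin'' is never defined.

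The one genuine gap is in the bounded case of Step~3. The claim that the $p_n$ converge because the segments $[x,p_n]$ are nested uses completeness of $T$. The lemma is stated for arbitrary $\R$--trees, and these are often incomplete (minimal ones typically are). In that case $\bigcup_n[x,p_n]$ may be a half-open segment $[x,\eta)$ with $\eta$ only in the metric completion, so no fixed point in $T$ is produced.

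The fix is to drop the bounded/unbounded split and run your ray argument uniformly. If $(p_n)$ is eventually constant, $H$ fixes that point. Otherwise, let $R_n$ be the pointwise stabiliser of $\bigcup_{m\geq n}[p_n,p_m]$. Then $K_n\leq R_n\leq R_{n+1}$, and each $R_n$ is an arc-stabiliser by the remark above. Hence the $R_n$ stabilise at some $R_N$ with $H\leq R_N$, and $H$ fixes $p_N$.

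The same device handles uncountable $H$ without needing a cofinal sequence of subgroups. Over the directed set of finitely generated $K\leq H$, the segments $[x,p_K]$ are all initial pieces of a single geodesic $\rho$ from $x$; to see this, compare $p_K$ and $p_{K'}$ through $p_{\langle K,K'\rangle}$. So either some $p_K$ is the far endpoint of $\rho$ and is fixed by $H$, or the following holds. The pointwise stabilisers of the parts of $\rho$ beyond the $p_K$ form a totally ordered family of arc-stabilisers, each containing the corresponding $K$. By the chain bound this family has a largest member, which contains $H$, so $H$ fixes the corresponding $p_K$.
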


Let $\mc{F}$ and $\mc{H}$ be two families of subgroups of $G$. As in the Introduction, a simplicial \emph{$(\mc{F},\mc{H})$--tree} is an action on a simplicial tree $G\acts T$ such that the $G$--stabiliser of each edge of $T$ lies in $\mc{F}$, and such that every subgroup in $\mc{H}$ fixes a point of $T$. We speak of a \emph{splitting} if the action is minimal and the tree is not a single point.

\subsection{Accessibility}\label{sub:access}

Following \cite{BF-complexity}, a splitting $G\acts T$ is \emph{reduced} if there does not exist a vertex $v\in T$ whose $G$--stabiliser fixes an incident edge and acts transitively on the remaining incident edges. More weakly, we say that $G\acts T$ is \emph{irredundant} if $T$ has no degree--$2$ vertices, except for those where the vertex-stabiliser swaps the two incident edges; in other words, $T$ was not obtained from a smaller splitting of $G$ simply by subdividing an edge. Reduced splittings are irredundant, but the converse does not hold.

A group $G$ is \emph{accessible} over a family of subgroups $\mc{F}$ if there exists a number $N(G)$ such that any reduced splitting of $G$ over $\mc{F}$ has at most $N(G)$ orbits of edges. We say that $G$ is \emph{unconditionally accessible} over $\mc{F}$ if the same is true of irredundant splittings. For instance, finitely presented groups are accessible over small subgroups \cite{BF-complexity}, but already the free group $F_2$ fails to be unconditionally accessible over cyclic subgroups \cite[p.\,450]{BF-complexity}. If $G$ is (unconditionally) accessible over a family $\mc{F}$, then $G$ is also (unconditionally) accessible over all sub-families $\mc{F}'\sq\mc{F}$.

One can often promote accessibility to unconditional accessibility using the following observation. In turn, unconditional accessibility will allow us to be a bit more careless in our arguments.

\begin{lem}\label{lem:unconditional_vs_conditional_acc}
    Let $G$ be finitely generated and accessible over a family $\mc{F}$. If there is a uniform bound on the length of chains of subgroups in $\mc{F}$, then $G$ is unconditionally accessible over $\mc{F}$.
\end{lem}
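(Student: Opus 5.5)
Given an irredundant splitting $G\acts T$ over $\mc{F}$, I will pass to a reduced splitting by collapsing edges, and I will bound how many edge-orbits can be lost in the process. Let $N(G)$ be the accessibility constant and let $L$ bound the length of chains in $\mc{F}$. The aim is a bound of the form $N(G)\cdot C(L)$ on the number of edge-orbits of any irredundant splitting over $\mc{F}$.

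First I analyse the failure of reducedness. Suppose $v\in T$ is a vertex such that $G_v$ fixes an incident edge $e$ and acts transitively on the remaining incident edges. Then $G_v=G_e\in\mc{F}$. If $v$ has degree $2$, irredundancy forces $G_v$ to swap the two incident edges; but it also fixes $e$, so this is impossible. Hence $v$ has degree at least $3$, and since $G_v$ is transitive on the other edges, the other incident edges lie in a single $G_v$--orbit $\{f_i\}$. Each $G_{f_i}$ is a subgroup of $G_v=G_e$. It is a proper subgroup: otherwise $G_v$ would fix $f_i$, and transitivity would then force $v$ to have degree $2$. So there is a strict inclusion $G_f<G_e$ of groups in $\mc{F}$ along any such redundant vertex.

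The reduction procedure collapses the edge $e$, which merges $v$ with the other endpoint of $e$. This is a $G$--equivariant collapse of one edge-orbit, and the resulting tree is again an $\mc{F}$--tree that may still fail to be reduced. I iterate until the tree is reduced. The resulting reduced tree has at most $N(G)$ edge-orbits, so it remains to bound how many orbits get collapsed. Each surviving edge-orbit $[f]$ can absorb collapsed orbits, and I want to organise the collapsed edges into "strings" hanging off each survivor. The key point is that every collapsed edge $e$ satisfies $G_e\supsetneq G_f$ for an adjacent edge $f$ that continues the string. Tracing the collapses, each collapsed orbit should give one step of a strictly increasing chain in $\mc{F}$ that starts from a surviving edge's stabiliser or from another collapsed edge. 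The length of such a chain is at most $L$.

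The main obstacle is the bookkeeping: collapsed edges can branch, since one vertex of the reduced tree comes from a whole subtree of collapsed edges. I will handle this by bounding the orbits of collapsed edges mapping to each vertex orbit of the reduced tree. Finite generation of $G$ bounds the number of vertex orbits in terms of edge orbits. I expect the collapsed subtree at a vertex $w$ to have the following structure: each collapsed edge has a unique "outward" direction towards a larger stabiliser, giving a forest in which every path is a strict chain of length at most $L$. Its branching should be controlled by the degree-$\ge3$ transitivity condition, which gives exactly one fixed edge per collapse step. Hence each collapse step lowers the number of edge-orbits by one and adds one link to a chain attached to a unique surviving edge-orbit or vertex-orbit. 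This yields a bound of roughly $(N(G)+1)\cdot L$ orbits. If this counting fails, I would instead try an induction on $L$, collapsing all edges whose stabilisers are maximal in $\mc{F}$ relative to their neighbours.
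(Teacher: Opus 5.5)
\textbf{There is a genuine gap in the counting step.} Your local analysis is right: irredundancy forces $G_f\subsetneq G_v=G_e$ at every vertex where reducedness fails. Collapsing the orbit of $e$ stays over $\mc{F}$, and iterating ends at a reduced tree with at most $N(G)$ edge orbits. The gap is the one substantial step, bounding the number of collapsed orbits, which you leave as an expectation whose specific claims are not correct.

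First, collapsed edges do not form strict chains. If $e=[v,w]$ is the fixed edge of both of its endpoints, then $G_v=G_e=G_w$ and $e$ has no ``outward direction''. A maximal run of collapsed edges generally has stabilisers strictly increasing up to such an edge and then strictly decreasing.

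Second, and more importantly, the branching is not controlled by the transitivity condition. You should count in the quotient graph $T/G$, where a bad vertex (one at which reducedness fails) has degree exactly $2$. So the collapsed edges lie on edge paths $\pi_i$ of $T/G$ whose interior vertices are bad and whose endpoints are not. Branching occurs only at those endpoints, where arbitrarily many runs can meet. Hence ``one chain attached to a unique surviving edge-orbit or vertex-orbit'' is unjustified (for vertex orbits it is false), and your bound $(N(G)+1)L$ does not follow from what you wrote.

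\textbf{The missing idea is that every $\pi_i$ retains an uncollapsed edge.} Say a bad vertex \emph{points to} its fixed edge.
\begin{itemize}
\item Absorbing a bad vertex through its fixed edge replaces one edge group at the absorbing vertex by a strictly smaller one. So no merge creates a bad vertex with a new fixed edge, and your procedure collapses exactly the edges pointed to by some original bad vertex.
\item Suppose the first and last edges of $\pi_i$ are both collapsed. Then the first interior vertex points backwards and the last points forwards, so they are distinct. Hence some two consecutive interior vertices point away from each other, and the edge between them is never collapsed.
\item Inside a maximal run of collapsed edges no such switch occurs. So stabilisers rise strictly and then fall strictly, and the run has fewer than $2L$ edges.
\end{itemize}
Each run is therefore adjacent along its path to a surviving edge, and each surviving edge is adjacent to at most two runs. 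So at most $4L\cdot N(G)$ edge orbits are collapsed.

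With this supplied your route works, but it differs from the paper's. The paper collapses both the edges pointed to from both sides (top) and those pointed to by neither side (bottom), which makes every adjacent vertex good. It then collapses all but one edge of each remaining monotone segment, of length at most $m$. This gives $|\mc{G}|\le m|\mc{G}'|$ directly, without tracking an iterative process.
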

\begin{proof}
    Let $G\acts T$ be an irredundant splitting over $\mc{F}$, and let $\mc{G}:=T/G$ be the quotient graph. We denote by $|\mc{G}|$ the number of edges of $\mc{G}$. Let $m$ be the largest integer such that there exists a chain $F_1\lneq\dots\lneq F_m$ in $\mc{F}$. We claim that there exists a collapse $G\acts T'$ that is reduced and has $\mc{G}':=T'/G$ with $|\mc{G}|\leq m|\mc{G}'|$. The lemma immediately follows from this.

    To prove the claim, say that a vertex $v\in\mc{G}$ is \emph{bad} if the splitting fails to be reduced at $v$: this means that $v$ has degree $2$ and its vertex group coincides with one of the two incident edge groups (exactly one, as $T$ is irredundant). There are finitely many edge paths $\pi_1,\dots,\pi_k$ in $\mc{G}$ such that the interiors of the $\pi_i$ are pairwise disjoint, all vertices in the interior of each $\pi_i$ are bad, and all bad vertices of $\mc{G}$ lie in the interior of some $\pi_i$. The endpoints of each $\pi_i$ are either vertices of degree $\geq 3$ in $\mc{G}$, or vertices of degree $\leq 2$ whose vertex group properly contains all incident edge groups. 
    
    Say that an edge of some $\pi_i$ is a \emph{top edge} if its edge group coincides with both vertex groups placed at its vertices, and a \emph{bottom edge} if its edge group is properly contained in both vertex groups; we call the remaining edges of $\pi_i$ \emph{transitional}. We need the following observations.
    \begin{enumerate}
        \item Let $\tau\sq\pi_i$ be a maximal segment all of whose edges are transitional. As we move along $\tau$, we see edge groups strictly increase (or strictly decrease, depending on the direction of movement). Thus, $\tau$ is preceded by a bottom edge of $\pi_i$ and followed by a top edge of $\pi_i$, unless some of the endpoints of $\tau$ coincide with endpoints of $\pi_i$. Moreover, $\tau$ contains at most $m$ edges (and so does the union of $\tau$ with its preceding/following bottom/top edges, when these exist).
        \item Suppose that some $\pi_i$ only contains transitional edges. Collapsing all edges of $\pi_i$ except for one (it does not matter which), we can remove all bad vertices in the interior of $\pi_i$, while the endpoints of $\pi_i$ remain distinct non-bad vertices of the new graph of groups.
        \item If we collapse a top or bottom edge of some $\pi_i$, then the resulting vertex of the new graph of groups becomes non-bad.
    \end{enumerate}
    Item~(3) shows that, up to collapsing all top and bottom edges of the $\pi_i$, we can assume that all $\pi_i$ only contain transitional edges. Item~(2) then shows that we can collapse all edges but one in each of the $\pi_i$ and thus obtain a collapse $\mc{G}'$ without bad vertices, that is, a reduced graph of groups. Finally, Item~(1) guarantees that $|\mc{G}|\leq m|\mc{G}'|$, concluding the proof.
\end{proof}

\subsection{Refinements, collapses, deformation spaces}

If $T$ and $S$ are simplicial $G$--trees, a $G$--equivariant map $\pi\colon T\ra S$ is a \emph{collapse} if it preserves alignment of triples of vertices; equivalently, $S$ is obtained from $T$ by collapsing to points the edges in some $G$--orbits. The tree $S$ is then called a \emph{collapse} of $T$, and the tree $T$ is called a \emph{refinement} of $S$.

The following is a classical observation, see \cite[Lemma~4.12]{GL-JSJ}. For a point $v\in T$, we denote by $G_v\leq G$ its stabiliser. Similarly, if $e\sq T$ is an edge, $G_e\leq G$ is its pointwise stabiliser.

\begin{lem}\label{lem:blow-up}
    Let $G\acts T$ be a splitting. 
    Consider a vertex $v\in T$ and a splitting $G_v\acts S$. Suppose that, for every edge $e\sq T$ incident to $v$, the stabiliser $G_e\leq G_v$ is elliptic in $S$. Then, there exists a splitting $G\acts T'$ with an equivariant collapse map $\pi\colon T'\ra T$ such that the preimage $\pi^{-1}(v)$ is $G_v$--equivariantly isomorphic to $S$, while the preimage $\pi^{-1}(w)$ is a singleton for all vertices $w\not\in G\cdot v$.
\end{lem}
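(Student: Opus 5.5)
The plan is the standard blow-up construction: replace each vertex in the orbit $G\cdot v$ by a copy of $S$, and attach each edge incident to $G\cdot v$ at a point of the corresponding copy that it fixes. Concretely, let $E_v$ be the set of edges of $T$ incident to $v$; it is $G_v$--invariant. Choose representatives $e_1,\dots,e_k,\dots$ of the $G_v$--orbits in $E_v$. By hypothesis each $G_{e_i}$ fixes a point of $S$, and since $S$ has no inversions we can pick a vertex $x_i\in S$ fixed by $G_{e_i}$. We then get a $G_v$--equivariant map $\phi\colon E_v\ra S^{(0)}$ by setting $\phi(g e_i):=g x_i$ for $g\in G_v$. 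This is well defined because $g e_i=h e_i$ forces $h^{-1}g\in G_{e_i}\leq G_{x_i}$; the orbits are distinct and the stabiliser of $e_i$ in $G_v$ is exactly $G_{e_i}$.

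Next we build $T'$. Take the disjoint union of the following pieces:
\begin{itemize}
\item the forest $G\times_{G_v}S$, that is, one copy $gS$ for each coset $gG_v$;
\item a singleton for each vertex $w\notin G\cdot v$;
\item one edge for each edge of $T$.
\end{itemize}
An edge of $T$ with endpoint $gv$ is attached at the point $g\phi(g^{-1}e)$ in the copy $gS$, which is independent of the choice of $g$ by equivariance of $\phi$. All other endpoints are attached to the corresponding singletons. Define $\pi\colon T'\ra T$ by collapsing each copy $gS$ to $gv$; then $\pi$ is $G$--equivariant. The space $T'$ is connected and simply connected, since collapsing the subtrees $gS$, which are disjoint and each contractible, recovers the tree $T$. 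More concretely, a cycle in $T'$ would project to a closed path in $T$ that either backtracks through an edge, which contradicts the attachment being single-valued, or lies inside some $gS$, which is a tree. So $T'$ is a simplicial tree. The $G$--action has no inversions: edges of copies of $S$ are not inverted by $G_v$, and edges coming from $T$ are not inverted because $T$ has none. By construction $\pi^{-1}(v)=S$ $G_v$--equivariantly and the other fibres are points, and $\pi$ is a collapse because it only collapses the $G$--orbits of edges of $S$.

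It remains to check minimality and that $T'$ is not a point. $T'$ has edges since $T$ does. For minimality, let $T''\sq T'$ be the minimal $G$--subtree; it exists because $G$ acts on $T'$ without a global fixed point, as $T$ is a nontrivial minimal tree. Then $\pi(T'')$ is a $G$--invariant subtree of $T$, hence equal to $T$, so $T''$ contains every edge coming from $T$. Minimality could fail only inside the copies of $S$. I expect this to be the main obstacle: the attaching points together with $G_v$--minimality of $S$ must force $T''\cap S$ to be all of $S$. The argument is that $T''\cap S$ is a $G_v$--invariant subtree, nonempty because $v$ has incident edges, and $S$ is $G_v$--minimal, so $T''\cap S=S$ unless $G_v$ fixes a point of $S$, which is excluded since $S$ is a splitting. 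If one worries that $T''\cap S$ is connected only after taking convex hulls, note that $T''$ is convex in $T'$ and $S$ is convex in $T'$, so their intersection is a subtree. Hence $T''=T'$ and $G\acts T'$ is a splitting.
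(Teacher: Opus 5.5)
Your proof is correct. It is the standard blow-up construction behind \cite[Lemma~4.12]{GL-JSJ}: choose $G_e$--fixed attaching vertices equivariantly, replace each $gv$ by a copy $gS$, then check the tree, inversion, collapse and minimality properties. The paper cites that lemma rather than reproving it.

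One small point: you do not need the minimal subtree $T''$ to exist. Your justification via ``no global fixed point'' would not suffice on its own for non-finitely-generated $G$. Your argument shows verbatim that every nonempty $G$--invariant subtree of $T'$ equals $T'$, which is exactly minimality.
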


Two $G$--trees are said to lie in the same \emph{deformation space} (in the sense of \cite{Forester,GL-JSJ}) if they have the same elliptic subgroups. The deformation space of a tree $G\acts T_1$ is said to \emph{dominate} the deformation space of another tree $G\acts T_2$ if every subgroup of $G$ that is elliptic in $T_1$ is also elliptic in $T_2$ (we also simply say that $T_1$ dominates $T_2$). 

We will need the following slight variation on \cite[Proposition~2.2]{GL-JSJ}. (Our assumptions here are slightly stronger, as we require edge-stabilisers of \emph{both} $T_1$ and $T_2$ to be elliptic, but this yields the stronger control on the edge-stabilisers of $T$ provided by Item~(3).)

\begin{lem}\label{lem:ref_dom}
    Let $G$ be finitely generated, and let $G\acts T_1$ and $G\acts T_2$ be two splittings. Suppose that the $G$--stabiliser of each edge of $T_1$ is elliptic in $T_2$, and that the $G$--stabiliser of each edge of $T_2$ is elliptic in $T_1$. Then there exists a splitting $G\acts T$ with the following properties:
    \begin{enumerate}
        \item $T$ is a refinement of $T_1$ that dominates $T_2$;
        \item a subgroup of $G$ is elliptic in $T$ if and only if it is elliptic in both $T_1$ and $T_2$;
        \item the $G$--stabiliser of each edge of $T$ is either the $G$--stabiliser of an edge of $T_1$ or $T_2$, or the intersection of an edge-stabiliser of $T_1$ with an edge-stabiliser of $T_2$;
        \item $T$ does not have any degree--$2$ vertices whose stabiliser fixes the two incident edges, except for any that $T_1$ might have already had and have not been blown up.
    \end{enumerate}
\end{lem}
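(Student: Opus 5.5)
The plan is to follow the standard proof of \cite[Proposition~2.2]{GL-JSJ}: blow up the vertices of $T_1$ using the action of their stabilisers on $T_2$. Since $G$ is finitely generated, we may replace $T_2$ by a tree in which every vertex-stabiliser action on its minimal subtree makes sense.

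For each vertex $v\in T_1$ (one per $G$--orbit, then extended equivariantly), consider the action $G_v\acts T_2$. If $G_v$ is elliptic in $T_2$, leave $v$ alone. Otherwise, let $S_v\sq T_2$ be a $G_v$--invariant subtree; we take it to be minimal, or at least the convex hull of the fixed points of the incident edge groups together with an invariant subtree. Finite generation of $G_v$ is not available a priori, which is a first subtlety. I would use the $G_v$--invariant subtree $S_v$ spanned by the points $\Fix(G_e)$ (nonempty by the hypothesis) for the edges $e$ incident to $v$, together with a minimal subtree for $G_v$ if it exists. Each incident edge-stabiliser $G_e$ is elliptic in $T_2$ by hypothesis, so it fixes a vertex of $S_v$, and \Cref{lem:blow-up} applies to give a refinement $T$ of $T_1$, replacing $v$ by $S_v$. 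The attaching points are chosen in $\Fix(G_e)\cap S_v$. Edges of $T$ are then either edges of $T_1$ (item~(3) first case) or edges of some $S_v\sq T_2$. The latter have stabiliser $G_v\cap G_{e_2}$ for $e_2$ an edge of $T_2$. I then need to argue that this is either $G_{e_2}$ itself or an intersection of $T_1$- and $T_2$-edge-stabilisers. Here I expect to use the fact that if $G_{e_2}\not\le G_v$, then $G_{e_2}$ is elliptic in $T_1$ but does not fix $v$. Hence $G_{e_2}\cap G_v$ fixes an edge of $T_1$ at $v$, namely the first edge of the path from $v$ to $\Fix_{T_1}(G_{e_2})$, so $G_v\cap G_{e_2}=G_{e_1}\cap G_{e_2}$. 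This is exactly where ellipticity of the $T_2$--edge-stabilisers in $T_1$ is used.

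For item~(2), a subgroup elliptic in $T$ is elliptic in the collapse $T_1$. It is also elliptic in $T_2$, via the natural $G$--map $T\to T_2$ sending each $S_v$ into $T_2$ and each unblown vertex to a fixed point. Defining this map equivariantly requires choices on the edges of $T_1$, which are sent to geodesics between attaching points. Conversely, let $H$ be elliptic in both. It fixes some $v\in T_1$ and a point of $T_2$, hence a point of the $G_v$--tree $S_v$, or it lies in $G_v$ unblown; so it is elliptic in $T$. Item~(1) follows, since domination of $T_2$ is immediate from~(2).

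For item~(4), degree-$2$ vertices of $T$ with stabiliser fixing both edges come either from $T_1$ vertices not blown up, or from vertices in some $S_v$. In the latter case we prune them: we collapse one of the two edges, which preserves (1)--(3) since collapsing only enlarges the elliptic family within the same deformation space here. We must also check that $S_v$ can be chosen so that the attaching edges do not create new such vertices. The main obstacle I expect is the choice of $S_v$ when $G_v$ is not finitely generated, so that the blow-up is a genuine minimal splitting with well-defined attaching points. Relatedly, care is needed that the resulting tree is minimal, which requires passing to the $G$--minimal subtree at the end.
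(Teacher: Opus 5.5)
Your strategy is the paper's: blow up each vertex $v\in T_1$ whose stabiliser is not elliptic in $T_2$ into a $G_v$--invariant subtree of $T_2$. You get (2) by projecting fixed points onto that subtree, and (3) exactly as the paper does, via the first edge of $T_1$ at $v$ pointing towards $\Fix_{T_1}(G_{e_2})$.

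The gap is the step you call the main obstacle, which you leave open. The paper closes it with one observation you are missing. Since $G$ is finitely generated and $T_1$ is minimal, $T_1/G$ is finite. So each $G_v$ is finitely generated \emph{relative to} finitely many stabilisers of edges incident to $v$ (\cite[Lemma~1.11]{Guir-Fourier}). These are elliptic in $T_2$ by hypothesis. Hence if $G_v$ is not elliptic in $T_2$, it contains a loxodromic element and has a well-defined minimal subtree $M_v\sq T_2$. Every incident $G_e$ fixes a point of $M_v$ by projection, and $M_v$ is the tree the paper blows $v$ up to. So finite generation of $G_v$ itself is never needed, and your preliminary idea of replacing $T_2$ is unnecessary.

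Minimality of $M_v$ is also what makes (4) straightforward, and you left (4) unchecked. Every vertex of $M_v$ has degree $\geq 2$ within $M_v$. So a new degree--$2$ vertex whose stabiliser fixes both incident edges must be a degree--$2$ vertex of $M_v$ with no attached $T_1$--edge. Such vertices are removed by collapsing edge orbits of $M_v$ without changing the deformation space of $G_v\acts M_v$.

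Your convex-hull fallback can probably be completed. The blow-up does not need $S_v$ to be minimal, and the projection argument for (2) works for any $G_v$--invariant subtree. The $G$--minimal subtree of the blown-up tree still collapses onto $T_1$ because $T_1$ is minimal. But it is messier, and you would still have to address the following:
\begin{itemize}
\item leaves of $S_v$ carrying a single attaching edge create additional bad degree--$2$ vertices;
\item in your pruning step, the collapsed edge must lie in the blown-up part, since collapsing a $T_1$--edge destroys the refinement of $T_1$;
\item you must check that each collapse preserves the deformation space;
\item you must check that the pruning terminates.
\end{itemize}
None of this is addressed in the proposal.
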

\begin{proof}
    Since $G$ is finitely generated and acts minimally on $T_1$, there are only finitely many $G$--orbits of vertices in $T_1$. Let $v_1,\dots,v_k$ be representatives of these orbits, and let $G_i$ denote the $G$--stabiliser of $v_i$. If all $G_i$ are elliptic in $T_2$, then $T_1$ dominates $T_2$ and we can simply set $T:=T_1$. Thus, up to discarding some $v_i$ and re-indexing, we can suppose that each $G_i$ is non-elliptic in $T_2$. Let $M_i\sq T_2$ be the $G_i$--minimal subtree; this exists because $G_i$ is finitely generated relative to finitely many $G$--stabilisers of edges of $T_1$ \cite[Lemma~1.11]{Guir-Fourier}, and the latter are elliptic in $T_2$.

    Now, since $G$--stabilisers of edges of $T_1$ are elliptic in $T_2$, we can use \Cref{lem:blow-up} to blow up each $v_i\in T_1$ (and each of its $G$--translates) to a copy of $M_i\sq T_2$. Call $G\acts T$ the resulting refinement of $T_1$. It is clear that a subgroup of $G$ is elliptic in $T$ if and only if it is elliptic in both $T_1$ and $T_2$, proving properties~(1) and~(2). Regarding property~(4), consider the collapse map $\pi\colon T\ra T_1$. Note that every vertex of $M_i$ has degree $\geq 2$ within $M_i$. Thus, up to collapsing some edge orbits of $G_i\acts M_i$ without altering its deformation space, we can assume that, for every vertex $x\in\pi^{-1}(v_i)$ that has degree $2$ within $\pi^{-1}(v_i)$ and whose $G$--stabiliser fixes both incident edges, there exists an edge of $T\setminus\pi^{-1}(v_i)$ that is attached to $x$. We are only left to show property~(3).

    Consider an edge $e\sq T$. If $e$ projects to an edge of $T_1$, then its $G$--stabiliser $G_e$ is an edge-stabiliser of $T_1$. Suppose instead that $e$ gets collapsed to a vertex of $T_1$, without loss of generality to some $v_i$. Thus, there exists an edge $f\sq M_i\sq T_2$ such that $G_e$ equals the $G_i$--stabiliser of $f$; that is, $G_e=G_i\cap G_f$. By assumption, $G_f$ is elliptic in $T_1$. If $G_f$ fixes the vertex $v_i$, we have $G_f\leq G_i$ and hence $G_e=G_f$ is an edge-stabiliser of $T_2$. Otherwise, denoting by $g\sq T_1$ the edge incident to $v_i$ in the direction of $\Fix(G_f)\sq T_1$, we have $G_i\cap G_f=G_g\cap G_f$. Thus, $G_e$ is the intersection of an edge-stabiliser of $T_1$ and an edge-stabiliser of $T_2$, as required.
\end{proof}

The following is a first example of how we will exploit accessibility.

\begin{lem}\label{lem:defspaces_stabilise}
    Let $G$ be finitely generated. Consider a family of subgroups $\mc{F}$ such that $\mc{F}$ is closed under finite intersections and $G$ is unconditionally accessible over $\mc{F}$. 
    \begin{enumerate}
        \item For any collection $\{G\acts T_i\}_{i\in I}$ of $(\mc{F},\mc{F})$--trees, there exists an $(\mc{F},\mc{F})$--tree $G\acts T$ whose elliptic subgroups are precisely those that are elliptic in all $T_i$. Moreover, $T$ can be chosen to refine any of the $T_i$.
        \item There is a uniform bound to the length of any sequence $\mf{D}_0,\mf{D}_1,\mf{D}_2,\dots$ of pairwise distinct deformation spaces of $(\mc{F},\mc{F})$--trees such that $\mf{D}_i$ dominates $\mf{D}_j$ for $i>j$.
    \end{enumerate}
\end{lem}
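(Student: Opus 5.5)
We prove Item~(2) first, since Item~(1) will follow from it by a maximality argument.

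\emph{Item~(2).} Let $\mf{D}_0,\dots,\mf{D}_n$ be pairwise distinct deformation spaces of $(\mc{F},\mc{F})$--trees with $\mf{D}_i$ dominating $\mf{D}_j$ for $i>j$, and pick trees $S_i\in\mf{D}_i$. We build inductively a sequence of $(\mc{F},\mc{F})$--trees $T_0,T_1,\dots,T_n$. Set $T_0:=S_0$. Given $T_{i-1}$, apply \Cref{lem:ref_dom} to $T_{i-1}$ and $S_i$. This is legitimate because edge-stabilisers of both trees lie in $\mc{F}$, and all subgroups in $\mc{F}$ are elliptic in every $(\mc{F},\mc{F})$--tree. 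The output $T_i$ refines $T_{i-1}$. By \Cref{lem:ref_dom}(2), its elliptic subgroups are those elliptic in both $T_{i-1}$ and $S_i$; by induction and domination, these are exactly the subgroups elliptic in $S_i$. By \Cref{lem:ref_dom}(3), its edge-stabilisers lie in $\mc{F}$, using closure under intersections. So $T_i\in\mf{D}_i$.

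Since $\mf{D}_i\neq\mf{D}_{i-1}$, the refinement $T_i\to T_{i-1}$ is not an isomorphism, so $T_i$ has strictly more edge-orbits than $T_{i-1}$. We want to count only \emph{essential} new edges. Collapsing every edge orbit of $T_i$ whose stabiliser equals the stabiliser of an adjacent vertex in a degree--$2$ configuration does not change the deformation space. By \Cref{lem:ref_dom}(4), no new redundant degree--$2$ vertices are created. So at each step we may replace $T_i$ by an irredundant tree in $\mf{D}_i$ that still refines (a collapse of) the previous tree and has strictly more edge-orbits.

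More cleanly, we keep every $T_i$ irredundant. At step $i$ we collapse redundant subdivisions, which preserves both the deformation space and the property of refining $T_{i-1}$. The count of edge-orbits then still increases strictly, because trees in different deformation spaces cannot coincide after collapse. Unconditional accessibility bounds the number of edge-orbits of irredundant $(\mc{F},\mc{F})$--splittings by $N(G)$, hence $n\le N(G)$.

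\emph{Item~(1).} Consider the set of deformation spaces of $(\mc{F},\mc{F})$--trees of the form obtained by iterating \Cref{lem:ref_dom} on finitely many $T_i$, starting from a chosen $T_{i_0}$. Each such combination again yields an $(\mc{F},\mc{F})$--tree refining $T_{i_0}$, whose elliptic subgroups are exactly those elliptic in all the trees combined. Adding one further $T_j$ either leaves the deformation space unchanged or produces a strictly dominating one. By Item~(2), we can choose a finite combination whose deformation space is maximal among these. Then every $T_j$ is dominated by the resulting tree $T$: otherwise, combining $T$ with $T_j$ would give a strictly larger space. Hence the elliptic subgroups of $T$ are precisely those elliptic in all $T_i$, and by construction $T$ refines $T_{i_0}$.

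\emph{Main obstacle.} The delicate step is bookkeeping the strict increase of irredundant edge-orbit counts along a chain of refinements. One must ensure that collapsing redundant degree--$2$ vertices preserves the refinement relation with the previous tree. The first thing to try is to collapse only edges created by the blow-up, which \Cref{lem:ref_dom}(4) already controls.
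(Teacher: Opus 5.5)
Your proposal is correct and is essentially the paper's argument: you iterate \Cref{lem:ref_dom} to realise the chain of deformation spaces by a chain of proper refinements, use \Cref{lem:ref_dom}(4) to keep redundant degree--$2$ vertices under control, and conclude by unconditional accessibility, with (1) following from the same bound (you phrase it as a maximality argument via (2), the paper as stabilisation of refinements along a sequence, which amounts to the same thing). The bookkeeping needs one correction: normalise only $T_0$, once, by collapsing it to an irredundant tree in the same deformation space, allowing as the paper does the exceptional case of a line with vertex-transitive action, which keeps one orbit of redundant vertices. After that, \Cref{lem:ref_dom}(4) makes every later $T_i$ irredundant automatically, so no collapsing at later steps is needed. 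This matters because collapsing an edge of $T_i$ that survives in $T_{i-1}$ would break the refinement relation, contrary to your claim that per-step collapsing preserves it.
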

\begin{proof}
    Consider a sequence $G\acts T_n$ of $(\mc{F},\mc{F})$--trees. Since $\mc{F}$ is intersection-closed, \Cref{lem:ref_dom} yields a sequence $G\acts T_n'$ of $(\mc{F},\mc{F})$--trees such that $T_0'=T_0$, each $T_{n+1}'$ is a refinement of $T_n'$, and a subgroup of $G$ is elliptic in $T_n'$ if and only if it is elliptic in all of $T_0,\dots,T_n$. Moreover, the $T_n'$ all have a bounded number of orbits of degree--$2$ vertices whose stabiliser fixes the two incident edges. Since $G$ is unconditionally accessible over $\mc{F}$, there is a uniform bound on the number of edge orbits of the $T_n'$, and hence the sequence of refinements $T_n'$ eventually stabilises. This proves part~(1).

    The proof of part~(2) is similar. Suppose that $T_0,\dots,T_k$ are $(\mc{F},\mc{F})$--trees such that $T_i$ dominates $T_j$ for $i>j$, and such that their deformation spaces are pairwise distinct. Up to collapsing edge orbits of $G\acts T_0$ without altering the deformation space of $T_0$, we can assume that $T_0$ has no degree--$2$ vertices whose stabiliser fixes the two incident edges (or that $T_0$ is a line on which $G$ acts vertex-transitively). Using \Cref{lem:ref_dom} as in the previous paragraph, we obtain $(\mc{F},\mc{F})$--trees $T_i'$ that refine $T_0$ and lie in the deformation space of $T_i$. Each $T_{i+1}'$ is a proper refinement of $T_i'$, as the deformation spaces of $T_i$ and $T_{i+1}$ are distinct. The tree $T_k'$ is then an $(\mc{F},\mc{F})$--tree with at least $k+1$ edge orbits and at most one orbit of degree--$2$ vertices whose stabiliser fixes the two incident edges. Thus, accessibility sets a uniform bound to the index $k$, proving part~(2).
\end{proof}

\section{The main theorem}\label{app:acc_R_trees}

In this section we prove \Cref{thm:acc_implies_nice}, which implies \Cref{thmintro} from the Introduction. Very roughly, starting from an action on an $\R$--tree $G\acts T$, our goal is to encode more and more of the $G$--action on $T$ into simplicial splittings $G\acts\Delta_n$ and to show that, because of accessibility, $T$ and $\Delta_n$ have the same elliptic subgroups for large $n$. We will do this by exploiting Rips--Sela theory, and we will often rely on the particularly clean treatment in \cite{Guir-Fourier} for reference.

We begin by giving a definition of quadratically hanging subgroups that is convenient for the following discussion. This is similar, but not identical, to the classical notion (see e.g.\ \cite[Definition~5.13]{GL-JSJ}). For a compact surface $\Sigma$, we say that a subgroup $P\leq\pi_1(\Sigma)$ is:
\begin{itemize}
    \item \emph{peripheral} if $P$ is conjugate to a subgroup of the fundamental group of a component of $\partial\Sigma$;
    \item a \emph{full peripheral} if either $P=\{1\}$ or $P$ is conjugate to the (entire) fundamental group of a component of $\partial\Sigma$.
\end{itemize}

\begin{defn}\label{defn:QH+}
    Let $G$ be a group, let $\mc{H}$ be a family of subgroups of $G$, and let $F\leq G$ be a subgroup. A subgroup $Q\leq G$ is an \emph{optimal quadratically hanging subgroup with fibre $F$}, relative to $\mc{H}$, if there exists a simplicial $G$--tree $G\acts S$ relative to $\mc{H}$ with the following properties:
    \begin{enumerate}
        \item there exists a vertex $x\in S$ such that $Q$ is the $G$--stabiliser of $x$;
        \item we have $F\lhd Q$ and an identification $Q/F\cong\pi_1(\Sigma)$ for a compact hyperbolic surface $\Sigma$ other than the pair of pants;
        \item for every edge $e\sq S$ incident to $x$, we have $F\lhd G_e$ and $G_e/F$ is a full peripheral in $\pi_1(\Sigma)$;
        \item for each component $B\sq\partial\Sigma$, there is at most one $Q$--orbit of edges $e\sq S$ incident to $x$ such that $G_e/F$ is conjugate to $\pi_1(B)$ in $\pi_1(\Sigma)$;
        \item for every $H\in\mc{H}$, the projection of $H\cap Q$ to $Q/F$ is peripheral in $\pi_1(\Sigma)$.
    \end{enumerate}
    If the above conditions hold, we also say that $x$ is an \emph{optimal QH vertex} of $S$. A subgroup $P\leq Q$ is a \emph{full peripheral} subgroup of $Q$, if we have $F\lhd P$ and $P/F$ is a full peripheral subgroup of $\pi_1(\Sigma)$. We say that the full peripheral subgroup $P$ is \emph{fake} if $P$ is not equal (hence also not commensurable) to the $G$--stabiliser of any edge of $S$ incident to $x$. A subgroup $E\leq Q$ is \emph{essential} if it is of the form $F\rtimes\langle\g\rangle$, where $\g$ is a lift of an essential simple closed curve on the surface $\Sigma$.
\end{defn}

Given another family $\mc{F}$ of subgroups of $G$, we denote by ${\rm QH}(\mc{F},\mc{H})$ the family of (optimal) quadratically hanging subgroups of $G$ with fibre in $\mc{F}$ relative to $\mc{H}$. We denote by ${\rm Per}(\mc{F},\mc{H})$ and ${\rm Ess}(\mc{F},\mc{H})$ the corresponding families of full peripheral and essential subgroups, respectively. 

We also denote by ${\rm AE}(\mc{F})$ the family of free abelian extensions of elements of $\mc{F}$, that is, the subgroups $E\leq G$ such that there exists $F\in\mc{F}$ with $F\lhd E$ and $E/F\cong\Z^k$ for some $k\geq 1$. Finally, recall that $\mc{F}_{\rm int}$ denotes the family of finite intersections of subgroups in $\mc{F}$.

Given an action on an $\R$--tree $G\acts T$, an arc $\beta\sq T$ is said to be \emph{stable} if all its sub-arcs have the same $G$--stabiliser. The action is \emph{BF-stable} if every arc of $T$ contains a stable sub-arc. We can finally state our main result.

\begin{thm}\label{thm:acc_implies_nice}
    Let $G$ be finitely presented and torsion-free. 
    Let $G\acts T$ be a minimal action on an $\R$--tree that is not a single point. Let $\mc{F}$ be the family of $G$--stabilisers of arcs of $T$, 
    and let $\mscr{E}$ be the family of subgroups of $G$ elliptic in $T$. Suppose that the following conditions are satisfied.
    \begin{enumerate}
        \item[(i)] The group $G$ is accessible over $\mc{F}_{\rm int}\cup{\rm Ess}(\mc{F},\emptyset)$.
        \item[(ii)] All elements of $\mc{F}_{\rm int}$ are finitely generated and root-closed.
        \item[(iii)] Chains of subgroups in $\mc{F}_{\rm int}\cup{\rm Ess}(\mc{F},\emptyset)$ have bounded length.
    \end{enumerate}
    Then all of the following hold.
    \begin{enumerate}
        \setlength\itemsep{.25em}
        \item There are only finitely many $G$--conjugacy classes of $G$--stabilisers of stable arcs of $T$.
        \item There are only finitely many $G$--orbits of branch points and of directions based at branch points. In particular, there are only finitely many $G$--orbits of points $p\in T$ such that $G_p\not\in\mc{F}$.
        \item For every point $p\in T$, all of the following hold.
        \begin{enumerate}
        \setlength\itemsep{.25em}
            \item The $G$--stabiliser $G_p$ is finitely generated.
            \item Suppose that $G_p\not\in\mc{F}$ and that $G_p$ is not a fake element of ${\rm Per}(\mc{F},\mscr{E})$. Then $G_p$ equals the $G$--stabiliser of a vertex $x$ in some $(\mc{F}_{\rm int}\cup{\rm Per}(\mc{F},\mscr{E}),\mscr{E})$--splitting $G\acts S$ with the following additional property: For every edge $e\sq S$ incident to $x$ such that $G_e\not\in\mc{F}_{\rm int}$, the vertex of $e$ other than $x$ is an optimal QH vertex of $S$ with fibre in $\mc{F}$ relative to $\mscr{E}$.
        \end{enumerate}
        \item Let a subgroup $H\leq G$ be non-elliptic in $T$. Then there is a $(\mc{F}_{\rm int}\cup{\rm Ess}(\mc{F},\mscr{E})\cup{\rm AE}(\mc{F}),\mscr{E})$--splitting of $G$ in which $H$ is non-elliptic. If $H$ does not preserve any line in $T$ acted upon indiscretely by its $G$--stabiliser, then $H$ is non-elliptic in a $(\mc{F}_{\rm int}\cup{\rm Ess}(\mc{F},\mscr{E}),\mscr{E})$--splitting.
    \end{enumerate}
\end{thm}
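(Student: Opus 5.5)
Following the strategy announced at the start of this section, the idea is to encode more and more of the action $G\acts T$ into simplicial splittings $G\acts\Delta_n$, and to use accessibility to force this process to stop. First I reduce to a setting where Rips--Sela theory applies. By (iii), chains of arc-stabilisers have bounded length, so $T$ is BF-stable and \Cref{lem:FK} applies to all subgroups of $G$. Since $G$ is finitely presented, I approximate $G\acts T$ by a sequence of geometric actions $G\acts T_k$ converging strongly to $T$, via resolutions as in \cite{Guir-Fourier}. Each $T_k$ decomposes as a graph of actions whose vertex actions are simplicial, axial, or of Seifert/surface type; here torsion-freeness and root-closedness from (ii) exclude the exotic (Levitt/thin) components. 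From this graph of actions I extract a simplicial $(\mc{F}_{\rm int}\cup{\rm Ess}(\mc{F},\mscr{E})\cup{\rm AE}(\mc{F}),\mscr{E})$--splitting of $G$ with the following edges:
\begin{itemize}
\item simplicial edges, with stabilisers in $\mc{F}$;
\item edges attaching surface-type components, which give optimal QH vertices with fibre in $\mc{F}$;
\item edges attaching axial components, with stabilisers in ${\rm AE}(\mc{F})$.
\end{itemize}
Splitting further along essential curves in the QH vertex groups produces edges in ${\rm Ess}(\mc{F},\mscr{E})$.

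Next comes the stabilisation argument. Consider the family of $(\mc{F}_{\rm int}\cup{\rm Ess},\mscr{E})$--trees produced this way, for all $k$. Using \Cref{lem:unconditional_vs_conditional_acc} together with (i) and (iii), $G$ is unconditionally accessible over this family after closing it under intersections; intersections of essential subgroups with elements of $\mc{F}_{\rm int}$ need checking, but they land in $\mc{F}_{\rm int}$ up to finite index, which root-closedness removes. \Cref{lem:defspaces_stabilise} then yields a single tree $\Delta$ whose elliptic subgroups are exactly those elliptic in all of the approximating splittings. Its part~(2) bounds increasing chains of deformation spaces, so the refinements $\Delta_n$ stabilise.

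The key claim, and the main obstacle, is that once the $\Delta_n$ stabilise, a subgroup elliptic in $\Delta$ but not elliptic in $T$ cannot exist, apart from the surface and axial pieces. To prove this I would argue by contradiction. Take such an $H$ and apply \Cref{lem:FK} to find a loxodromic element $h\in H$. In some $T_k$, the element $h$ must then act hyperbolically through a simplicial, surface or axial component, and this yields a further splitting, built with \Cref{lem:blow-up} and \Cref{lem:ref_dom}, that properly refines the stabilised deformation space. This contradicts the stabilisation. The delicate part is showing that the new splitting's edge groups still lie in the right family and that $\mscr{E}$ stays elliptic. This uses condition (5) of \Cref{defn:QH+} and the optimality of QH vertices; the ${\rm AE}(\mc{F})$ edges appear exactly when $H$ preserves an indiscretely acted line.

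The conclusions then follow. Item~(4) is immediate from the contradiction argument. For item~(3), each $G_p$ with $G_p\notin\mc{F}$ is either a vertex group of the final splitting, or a full peripheral subgroup of a QH vertex (and it is not fake by hypothesis). Blowing up at that vertex gives the splitting $S$ required in~(3b). Finite generation~(3a) holds because vertex groups of a splitting of a finitely generated group with finitely generated edge groups are finitely generated, and edge groups here lie in $\mc{F}_{\rm int}$ (finitely generated by (ii)) or are extensions of such by $\Z$ or $\Z^k$. Items~(1) and~(2) follow because the final splitting has finitely many edge and vertex orbits: branch points of $T$ correspond to finitely many orbits of vertices of the splitting and of singular points in the finitely many surface pieces, and stabilisers of stable arcs are edge groups or fibres.
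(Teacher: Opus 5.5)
Your overall strategy matches the paper's, but several steps would fail as written.

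\textbf{The components and items (1)--(2).} Torsion-freeness and root-closedness do not exclude exotic (Levitt/thin) components; these already arise for free actions of free groups. So a whole case is missing. The paper handles it with Guirardel's theorem that $G_U/K_U$ has a free splitting with the same elliptic subgroups as $G_U/K_U\acts U$, and later modifies the splitting near exotic vertices (\Cref{rmk:excellent->fg_edges}) to obtain the edge families required in (3b).

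You also never arrange that the indecomposable components are saturated and eventually stable. Without this, the fibre of a surface piece need not lie in $\mc{F}$, and the pieces change with $k$, so there is no fixed collection of QH vertices to work with. The paper gets this from \Cref{lem:ind_stabilise}, using the accessibility bound on orbits of saturated components (\Cref{lem:fin_many_stable}) and Guirardel's Lemma~A.7. (Also, edges attaching an axial component have stabiliser the kernel $K_U\in\mc{F}$; ${\rm AE}(\mc{F})$ only enters in part~(4), when the axial vertex group itself is split.)

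Finally, items~(1) and~(2) do not follow from a final splitting $\Delta$. There is no map relating $T$ and $\Delta$. Branch points of $T$ (e.g.\ those with stabiliser in $\mc{F}$) need not correspond to vertices of $\Delta$, and stable-arc stabilisers need not be edge groups of $\Delta$. The paper proves these items first and directly. It lifts finitely many stable arcs or branch directions isometrically to a single approximation $\mc{G}_n$ (your $T_k$) and builds irredundant splittings with too many edge orbits, using Guirardel's complexity bounds inside indecomposable components. Item~(1) is then needed to assume $\mc{F}\sq\mscr{E}_n$ for all $n$, which the rest of the argument relies on.

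\textbf{The stabilisation step.} \Cref{lem:defspaces_stabilise} needs $(\mc{F}',\mc{F}')$--trees with $\mc{F}'$ intersection-closed, so edge groups must be mutually elliptic. Splittings along two intersecting essential curves of one surface piece violate this and have no common refinement. Moreover, the splitting extracted from $T_k$ is only relative to the family $\mscr{E}_k$ of subgroups elliptic in $T_k$, not to $\mscr{E}$. A common refinement over all $k$ is only guaranteed to be relative to $\mscr{E}_0$, which is the wrong direction. Non-finitely generated point-stabilisers of $T$, the real difficulty of the theorem, may be elliptic in no $T_k$.

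The paper instead works as follows:
\begin{enumerate}
\item It uses the class $\mscr{S}_k$ of splittings relative to $\mscr{E}_k$ that keep each $I\in{\rm Ind}$ as a vertex group, with incident edge groups in $\partial I$ and all other edge groups in $\mc{F}_{\rm int}$ (\Cref{defn:excellent}).
\item It proves a refinement/domination statement inside this class by hand (Claim~1, via \Cref{lem:int_in_int}) and takes a maximally dominating $\Delta_k\in\mscr{S}_k$.
\item Since $\Delta_k$ dominates $\Delta_m$ for $m>k$, these deformation spaces stabilise.
\item Only then does \Cref{lem:FK} make every element of $\mscr{E}$ elliptic.
\end{enumerate}

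\textbf{The key claim.} This is Claim~2 of \Cref{prop:excellent_exists}, but your mechanism is not the right one. You need a splitting of the \emph{vertex group} $G_x$ over an element of $\mc{F}_{\rm int}$, relative to its incident edge groups and to $\mscr{E}_n$, so that $\Delta_k$ can be refined without leaving $\mscr{S}_k$. In the indecomposable case this comes from $G_x\cap G_U$ being elliptic in $U$. Hence at most one point $z\in U$ has $G_U$--stabiliser whose intersection with $G_x$ is not contained in $K_U$. Take a loxodromic $g\in G_x$ whose axis leaves $U$ at two points. At one of them, the corresponding edge $e$ of $\mc{D}_n$ satisfies $G_x\cap G_e=G_x\cap K_U\in\mc{F}_{\rm int}$. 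Condition~(5) in the definition of optimal QH subgroups plays no role here.
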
 

Parts~(1) and~(2) of the theorem are fairly standard consequences of accessibility; we show them in Lemmas~\ref{lem:fin_many_stable} and~\ref{lem:branch_points}, respectively. Instead, finite generation of point-stabilisers and the construction of splittings relative to the whole family $\mscr{E}$ in part~(4) will take a bit more work.

\begin{rmk}\label{rmk:again_cc_near_QH}
    Suppose that, in part~(3) of \Cref{thm:acc_implies_nice}, we have $G_p\not\in\mc{F}$. Then one can deduce the following technical consequence from the statement of the theorem, which will be needed in the proof of \Cref{corintro}: 
    The stabiliser $G_p$ has a subgroup of index $\leq 2$ of the form $W_1\cap W_2$, where: 
    \begin{itemize}
        \item each $W_i$ is the $G_0$--stabiliser of a vertex in a splitting $G_0\acts S_i$, for a certain subgroup $G_0\leq G$ of index $\leq 2$;
        \item all edge groups of $S_i$ are intersections with $G_0$ of elements of $\mc{F}_{\rm int}\cup{\rm Ess}(\mc{F},\mscr{E})$. 
    \end{itemize}
    If $G_p$ is not a fake element of ${\rm Per}(\mc{F},\mscr{E})$, then the above fact is shown by exploiting the particular structure of the splitting $G\acts S$ provided by part~(3b) of the theorem: passing to a subgroup $G_0\leq G$, we can assume that all QH vertex groups of $S$ correspond to orientable surfaces; on each of these surfaces, we can suitably pick two different essential multicurves, then refine $S$ in two different ways by splitting the QH vertex groups along the chosen multicurves (\Cref{lem:blow-up}); finally, the splittings $G_0\acts S_i$ are obtained from these refinements by collapsing any edges with stabiliser in ${\rm Per}(\mc{F},\mscr{E})\setminus\mc{F}_{\rm int}$. We refer to the proof of \cite[Lemma~4.19]{Fio11} for additional details on how to pick the multicurves so that $W_1\cap W_2=G_0\cap G_p$.
    
    When $G_p$ happens to be a fake element of ${\rm Per}(\mc{F},\mscr{E})$, we can still rely on an almost identical argument: one instead uses the fact that $G_p$ corresponds to a boundary component of the associated surface that is ``not attached to anything''. We can then split this surface along two multicurves as above to again prove the statement.
\end{rmk}

We now start working towards the proof of \Cref{thm:acc_implies_nice}, which will occupy the rest of the section. Since $G$ is finitely presented, there exists a sequence $G\acts\mc{G}_n$ of (minimal) geometric $\R$--trees converging strongly to $T$ (in the sense of \cite{LP-geom}).
In particular, this means that there are $G$--equivariant morphisms $f_n\colon\mc{G}_n\ra T$ (continuous maps such that each arc of $\mc{G}_n$ can be subdivided into finitely many arcs on which $f_n$ is isometric), and also equivariant morphisms $f_n^m\colon\mc{G}_n\ra\mc{G}_m$ for $m>n$ such that $f_n=f_m\o f^m_n$. We denote by $\mscr{E}_n$ the family of subgroups of $G$ that are elliptic in $\mc{G}_n$, and observe that $\mscr{E}_n\sq\mscr{E}$ and $\mscr{E}_n\sq\mscr{E}_m$ for $m>n$. Strong convergence implies that, in particular, any finitely generated element of $\mscr{E}$ eventually lies in $\mscr{E}_n$.
    
Let $G\acts\mc{D}_n$ be the simplicial tree dual to the decomposition of $\mc{G}_n$ into indecomposable components and simplicial arcs \cite[Proposition~1.25]{Guir-Fourier}. The tree $\mc{D}_n$ is bipartite: there is a black vertex of $\mc{D}_n$ for each indecomposable component of $\mc{G}_n$, as well as for each maximal arc of $\mc{G}_n$ containing no branch points in its interior (a ``simplicial arc''); there is a white vertex of $\mc{D}_n$ for every point of intersection between subtrees of $\mc{G}_n$ associated to black vertices; edges of $\mc{D}_n$ correspond to point-subtree inclusions. The tree $G\acts\mc{D}_n$ is again minimal and relative to $\mscr{E}_n$. 

If $U\sq\mc{G}_n$ is an indecomposable component, we denote by $G_U$ its $G$--stabiliser, and by $[U]$ the corresponding black vertex of $\mc{D}_n$. Note that, if $e\sq\mc{D}_n$ is an edge incident to $[U]$, then the $G$--stabiliser of $e$ is the $G_U$--stabiliser of a point of $U$. Also note that $G_U$ is finitely generated (this follows e.g.\ from the proof of \cite[Proposition~1.25]{Guir-Fourier} since the cut locus is compact).

Observe that the action $G\acts T$ is BF--stable because of Condition~$(iii)$. If $U\sq\mc{G}_n$ is an indecomposable component, it follows that the image $f_n(U)\sq T$ is a stable subtree (see \cite[Section~1.6]{Guir-Fourier}). If $K_U$ denotes the $G$--stabiliser of one/all arcs of $f_n(U)\sq T$, we have $G_U\leq N_G(K_U)$ and $K_U\in\mc{F}$. In general, arc-stabilisers of $U\sq\mc{G}_n$ are proper subgroups of $K_U$. We will say that $U$ is \emph{saturated} if $K_U$ fixes $U$ pointwise, that is, if $K_U$ is the kernel of the action $G_U\acts U$, as well as the $G$--stabiliser of every arc of $U$.

If $U\sq\mc{G}_n$ is saturated, we obtain an indecomposable action $G_U/K_U\acts U$ with trivial arc-stabilisers. The output of the Rips machine now shows that $U$ is of one of three possible types --- axial, exotic, or surface; see e.g.\ Remark~1.29 and Proposition~A.6 in \cite{Guir-Fourier}. We will use the following facts about these three types:
\begin{itemize}
    \setlength\itemsep{.25em}
    \item If $U$ is axial, then $U$ is isometric to $\R$ and $G_U/K_U$ is free abelian of rank $\geq 2$ (note that $G_U/K_U$ is torsion-free because $K_U\in\mc{F}$ is root-closed by Condition~$(ii)$). Moreover, $K_U$ is the $G$--stabiliser of all edges of $\mc{D}_n$ incident to the black vertex $[U]$.
    \item If $U$ is exotic, then $G_U/K_U$ admits a (simplicial) free splitting with the same elliptic subgroups as the action $G_U/K_U\acts U$ \cite[Proposition~7.2]{Guir-CMH}. In particular, the point-stabilisers of the action $G_U/K_U\acts U$ are finitely generated, because they are free factors of the finitely generated group $G_U/K_U$. Moreover, $G_U$ splits over $K_U$ relative to the $G$--stabilisers of the edges of $\mc{D}_n$ incident to the black vertex $[U]$. We can then use this splitting of $G_U$ to refine the splitting $G\acts\mc{D}_n$ (\Cref{lem:blow-up}), if we so desire.
    \item If $U$ is of surface type, then the action $G_U/K_U\acts U$ is dual to an arational measured foliation on a compact hyperbolic surface $\Sigma_U$ of which $G_U/K_U$ is the fundamental group. Non-trivial point-stabilisers of $G_U/K_U\acts U$ are maximal cyclic subgroups of $G_U/K_U$, namely the conjugates of the fundamental groups of the components of $\partial\Sigma_U$. The $G$--stabiliser of each edge of $\mc{D}_n$ incident to $[U]$ is either equal to $K_U$ or to the extension of $K_U$ by one of these cyclic subgroups. In particular, $[U]$ is an optimal quadratically hanging vertex of $\mc{D}_n$ with fibre $K_U$. Note that $\Sigma_U$ is indeed not a pair of pants, as it supports an arational measured foliation.

    Any essential simple closed curve $\g\sq\Sigma_U$ gives a splitting of $G_U/K_U$ over the maximal cyclic subgroup $C:=\langle\g\rangle$, relative to the fundamental groups of the components of $\partial\Sigma_U$. In turn, this gives a splitting of $G_U$ over the essential subgroup $K_U\rtimes\langle\g\rangle$, relative to the $G$--stabilisers of the edges of $\mc{D}_n$ incident to the black vertex $[U]$. Again, we can use this splitting of $G_U$ to refine $G\acts\mc{D}_n$, if we so desire.
\end{itemize}
Summing up, any saturated indecomposable component of exotic or surface type $U\sq\mc{G}_n$ 
determines a particular splitting of $G$ over an element of $\mc{F}$ or ${\rm Ess}(\mc{F},\mscr{E}_n)$, relative to $\mscr{E}_n$. This splitting is obtained by first refining $\mc{D}_n$ at the vertex $[U]$ and then collapsing (most) edges of $\mc{D}_n$.

We can now use accessibility to prove part~(1) of \Cref{thm:acc_implies_nice}.

\begin{lem}\label{lem:fin_many_stable}
    There are only finitely many $G$--conjugacy classes of $G$--stabilisers of stable arcs of $T$. Moreover, there exists an integer $N$ such that each tree $\mc{G}_n$ has at most $N$ $G$--orbits of saturated indecomposable components.
\end{lem}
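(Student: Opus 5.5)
We plan to deduce both statements from the unconditional accessibility of $G$ over $\mc{F}_{\rm int}\cup{\rm Ess}(\mc{F},\emptyset)$, which follows from Conditions~(i) and~(iii) via \Cref{lem:unconditional_vs_conditional_acc}. Let $N_0$ bound the number of edge orbits of irredundant splittings over this family. The idea is to realise many pairwise non-conjugate stable arc-stabilisers, or many orbits of saturated components, as edge groups of a single irredundant simplicial splitting. This would contradict the bound $N_0$.

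We start with the second statement. Fix $n$, and let $U_1,\dots,U_k$ be representatives of the $G$--orbits of saturated indecomposable components of $\mc{G}_n$.
\begin{itemize}
\item If $U_i$ is exotic, we use \Cref{lem:blow-up} to refine $\mc{D}_n$ at $[U_i]$ along a splitting of $G_{U_i}$ over $K_{U_i}\in\mc{F}$.
\item If $U_i$ is of surface type, we refine $\mc{D}_n$ at $[U_i]$ along an essential simple closed curve. This gives an edge group in ${\rm Ess}(\mc{F},\mscr{E}_n)\sq{\rm Ess}(\mc{F},\emptyset)$.
\item If $U_i$ is axial, $G_{U_i}/K_{U_i}\cong\Z^m$ with $m\geq2$, so $G_{U_i}$ splits as an HNN extension over an extension of $K_{U_i}$ by $\Z^{m-1}$. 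This does not obviously lie in our family. Instead, we keep an incident edge of $\mc{D}_n$, whose stabiliser is $K_{U_i}$.
\end{itemize}
Next we collapse every edge of the refined tree except one new edge (or one chosen incident edge) for each $i$. Then we collapse further, or apply the reduction procedure from the proof of \Cref{lem:unconditional_vs_conditional_acc}, to reach an irredundant splitting.

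The main obstacle is this step: the chosen edges must stay in distinct orbits and must not become redundant. A redundant edge would sit at a degree--$2$ vertex between two adjacent chosen edges. To avoid this, we will select the edges so that each one separates the component $[U_i]$ from the rest. The edge inside $[U_i]$ coming from the refinement is non-separating in the quotient, or splits $G_{U_i}$ nontrivially. Either way, collapsing the remaining edges leaves a graph of groups in which each chosen edge survives. If it were redundant, that would force $G_{U_i}$ to equal an edge group, which is impossible for an indecomposable action with trivial arc-stabilisers. Once redundancy is ruled out, we get $k\leq N_0$, so $N:=N_0$ works. If the axial case resists this, we absorb the factor by bounding the number of transitional edges using the chain bound $m$ from Condition~(iii), which gives $N=mN_0$.

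For the first statement, let $\beta\sq T$ be a stable arc. By strong convergence and BF--stability, $G_\beta$ is finitely generated by Condition~(ii). We plan to show that, for $n$ large, $G_\beta$ is conjugate to either some $K_U$ with $U\sq\mc{G}_n$ saturated, or an edge group of $\mc{D}_n$ coming from a simplicial arc of $\mc{G}_n$. Choose a sub-arc $\beta'\sq\beta$ lifting isometrically to an arc of $\mc{G}_n$. Its $\mc{G}_n$--stabiliser increases with $n$, and since $G_\beta$ is finitely generated it eventually equals $G_\beta$. The lifted arc lies either in a simplicial arc or in an indecomposable component $U$. In the second case, $G_\beta$ fixing an arc of $U$ with $G_\beta=K_U$ means $U$ is saturated, after passing to later $n$.

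Now suppose there were infinitely many conjugacy classes. Then we could find $n$ and arbitrarily many pairwise non-conjugate stable stabilisers realised as edge groups of $\mc{D}_n$ or as fibres $K_U$. Applying the collapsing argument of the previous paragraph to the corresponding edge orbits produces irredundant $\mc{F}_{\rm int}$--splittings with more than $N_0$ edge orbits, a contradiction.
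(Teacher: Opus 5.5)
There is a genuine gap in the second part, at the irredundancy step for axial components, which is the step you yourself flagged. Your argument rules out redundancy only at the vertex $[U_i]$, where indeed $K_{U_i}\lneq G_{U_i}$. The new edges from exotic or surface refinements are also harmless, since their stabilisers are properly contained in the stabilisers of both endpoints.

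The problem is the edge $e=[w,[U_i]]$ kept for an axial $U_i$: it can be redundant at its \emph{other} endpoint. This happens if $U_i,U_j$ are axial components in distinct orbits with conjugate kernels, and their kept edges $e,f$ meet at a vertex $w$ which, after collapsing, has degree $2$ and $G_w=G_e=G_f$. No choice of incident edges avoids this. Suppose $\mc{D}_n/G$ is the segment $[U_1]-w-[U_2]$, with both components axial and $G_w=K_{U_1}=K_{U_2}$. This occurs, e.g., for $G=\Z^2*\Z^2$ and $T=\mc{G}_n$ obtained from the Bass--Serre tree of the free product by replacing each vertex with a line on which the corresponding factor acts with dense orbits. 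Your procedure then returns $\mc{D}_n$ itself, which is not irredundant, and every irredundant collapse of it has one edge orbit although there are two saturated components. So your construction does not give $k\le N_0$, and $N:=N_0$ is unjustified.

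Your fallback does not repair this. The chain bound from Condition~(iii) controls runs of transitional edges, along which stabilisers strictly increase. The redundancy here involves \emph{equal} stabilisers, and in the example above chains of trivial groups give no room at all.

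The missing observation, which is how the paper argues, is that this is the \emph{only} way redundancy can arise. A redundant vertex must be the far endpoint of exactly two kept axial edges lying in distinct $G$--orbits, so redundant vertices pair up orbits of kept axial edges. Collapse one edge orbit from each pair. The surviving edge then sits at a vertex whose stabiliser contains some $G_{U_i}\gneq K_{U_i}$, so irredundancy is restored. At most half the edge orbits are lost, giving $k\le 2N_0$; this is why the paper takes $N:=2N'$.

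In your first part, the analogous step should be justified explicitly by the pairwise non-conjugacy of the chosen stable stabilisers, rather than by appeal to the flawed paragraph. Two distinct edges at a redundant vertex have equal stabilisers. If they lie in the same orbit, the splitting is a line. With that addition, your reduction of the first statement to saturated fibres $K_U$ plus simplicial arcs is a valid, slightly more roundabout, alternative to the paper's direct treatment of $N'+1$ pairwise non-conjugate stable arcs.
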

\begin{proof}
    Using \Cref{lem:unconditional_vs_conditional_acc}, the combination of Conditions~$(i)$ and~$(iii)$ implies that $G$ is unconditionally accessible over $\mc{F}\cup{\rm Ess}(\mc{F},\emptyset)$. Thus, let $N'$ be the largest number of edge orbits in an irredundant splitting of $G$ over $\mc{F}\cup{\rm Ess}(\mc{F},\mscr{E}_n)$. 
    
    Suppose for the sake of contradiction that there exist $N'+1$ stable arcs $\overline\beta_0,\dots,\overline\beta_{N'}\sq T$ whose $G$--stabilisers $B_0,\dots,B_{N'}$ are pairwise not $G$--conjugate. Since each $B_i$ is finitely generated, by Condition~$(ii)$, there exists $n\in\N$ such that the stable arcs $\overline\beta_i\sq T$ all lift isometrically to (stable) arcs $\beta_i\sq\mc{G}_n$ with $B_i$ as their $G$--stabiliser (this follows from the definition of strong convergence). Each $\beta_i$ shares an arc with a subset $U_i\sq\mc{G}_n$ that is either an indecomposable component or a maximal simplicial arc. 
    
    Now, consider the splitting $G\acts\mc{D}_n$. Form a new splitting $G\acts\mc{D}_n'$ by blowing up, for each indecomposable $U_i$ of exotic or surface type, the vertex $[U_i]\in\mc{D}_n$ to a one-edge splitting of $G_{U_i}$ over an element of $\mc{F}\cup{\rm Ess}(\mc{F},\mscr{E}_n)$, as described above. Note that the $G$--stabiliser of each new edge of $\mc{D}_n'$ is properly contained in the $G$--stabiliser of both incident vertices. 
    Then form a third splitting $G\acts\mc{D}_n''$ by collapsing all $G$--orbits of edges of $\mc{D}_n'$, except for those created in the blow-up, and except for the $G$--orbit of one edge incident to each of the vertices $[U_i]$ of axial or simplicial type. The result is that $G\acts\mc{D}_n''$ is a splitting over $\mc{F}\cup{\rm Ess}(\mc{F},\mscr{E}_n)$ with exactly $N'+1$ orbits of edges. Moreover, $\mc{D}_n''$ is irredundant since the $B_i$ are pairwise not $G$--conjugate (unless $N'=0$ and $\mc{D}_n''\cong\R$). This violates accessibility, providing the required contradiction and proving the first half of the lemma with $N:=N'$.

    The second half is similar: Suppose that some $\mc{G}_n$ has $2N'+1$ saturated indecomposable components $U_0,\dots,U_{2N'}$ in pairwise distinct $G$--orbits. We again consider $\mc{D}_n$ and inflate it to a tree $\mc{D}_n'$ by blowing up each vertex $[U_i]$ of exotic or surface type to a one-edge splitting of $G_{U_i}$ over an element of $\mc{F}\cup{\rm Ess}(\mc{F},\mscr{E}_n)$. Then we form a splitting $\mc{D}_n''$ by collapsing all edges of $\mc{D}_n'$ coming from edges of $\mc{D}_n$, except for the $G$--orbit of one edge incident to each $[U_i]$ of axial type. Again, $G\acts\mc{D}_n''$ is a splitting over $\mc{F}\cup{\rm Ess}(\mc{F},\mscr{E}_n)$ with exactly $2N'+1$ orbits of edges. Any failure of $\mc{D}_n''$ to be irredundant comes from a degree--$2$ vertex whose two incident edges $e,f$ have the same $G$--stabiliser; this can only happen if the other vertices of $e$ and $f$ are, respectively, $G$--translates of some $[U_i],[U_j]$ with $i\neq j$ and both $U_i,U_j$ axial (with $G$--conjugate kernels). In particular, the edges $e$ and $f$ are in distinct $G$--orbits (as $\mc{D}_n''\not\cong\R$) and the problem is solved by collapsing the $G$--orbit of one of them. 
    Ultimately, we obtain an irredundant splitting of $G$ over $\mc{F}\cup{\rm Ess}(\mc{F},\mscr{E}_n)$ with at least $N'+1$ edge orbits, violating again accessibility. We can thus take $N:=2N'$. 
\end{proof}

Using the previous lemma and Guirardel's work in \cite[Appendix~A]{Guir-Fourier}, we can now conclude that the indecomposable components of the geometric approximations $\mc{G}_n$ stabilise for large $n$. 

\begin{lem}\label{lem:ind_stabilise}
    There exists $n_0\in\N$ such that all the following hold for all $n\geq n_0$.
    \begin{enumerate}
        \item Every indecomposable component $U\sq\mc{G}_n$ is saturated.
        \item For all $m>n$ and each indecomposable component $U\sq\mc{G}_n$, the image $f^m_n(U)\sq\mc{G}_m$ is an indecomposable component of $\mc{G}_m$, and the map $f^m_n|_U\colon U\ra f^m_n(U)$ is an isometry. Moreover, the $G$--stabilisers of $U$ and $f^m_n(U)$ coincide.
        \item For all $m>n$, each indecomposable component of $\mc{G}_m$ is the image under $f^m_n$ of a (unique) indecomposable component of $\mc{G}_n$.
    \end{enumerate}
\end{lem}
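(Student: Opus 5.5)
The plan is to combine the uniform bound $N$ of \Cref{lem:fin_many_stable} with a monotone complexity that can only change finitely often. We then invoke Guirardel's analysis of indecomposable components under strong convergence in \cite[Appendix~A]{Guir-Fourier}.

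The first step is saturation (part~(1)). Let $U\sq\mc{G}_n$ be an indecomposable component and $K_U$ the stabiliser of the arcs of the stable subtree $f_n(U)\sq T$. By Condition~(ii), $K_U$ is finitely generated. Strong convergence then gives some $m>n$ such that $K_U$ fixes pointwise the image of a given finite subtree of $U$ in $\mc{G}_m$. Using that $f^m_n(U)$ is contained in an indecomposable component $U'$ of $\mc{G}_m$ with $G_U\leq G_{U'}$, and that $K_{U'}=K_U$, this fixed subtree spreads to all of $U'$ by indecomposability: $K_U$ is normalised by $G_{U'}$, and $\Fix(K_U)\cap U'$ is a $G_{U'}$--invariant subtree containing an arc. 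Hence $U'$ is saturated.

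One difficulty is that $G\acts T$ has only finitely many conjugacy classes of stable-arc stabilisers, by \Cref{lem:fin_many_stable}, while a priori there could be infinitely many orbits of components. To get uniformity, I would instead argue with a complexity. For each $n$, record the number of $G$--orbits of saturated components of $\mc{G}_n$. By \Cref{lem:fin_many_stable} this number is at most $N$. I would then show, following \cite[Appendix~A]{Guir-Fourier}, that images of saturated components remain saturated components. If $U$ is saturated, then $f^m_n|_U$ is injective: otherwise it would fold two arcs in $U$ with the same stabiliser $K_U$, and the quotient $G_U/K_U$ acting with trivial arc-stabilisers and indecomposably forbids this. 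So $f^m_n|_U$ is an isometry onto a subtree lying in a single indecomposable component $U'$. Moreover $G_U$ has finite index in, or equals, $G_{U'}$, since both act on minimal trees with the same kernel and indecomposable image. With the type classification (axial, exotic, surface), equality $U'=f^m_n(U)$ should follow from Guirardel's result that a geometric indecomposable subtree with finitely generated stabiliser is a component of any geometric tree it maps into.

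Next comes a monotonicity argument. The count of saturated component orbits is nondecreasing in $n$, since saturated components map injectively onto distinct components, and it is bounded by $N$. Hence it stabilises from some $n_1$ on. Combining this with the first step, every non-saturated component of $\mc{G}_n$ eventually becomes saturated, adding a new orbit. Each $\mc{G}_n$ has finitely many orbits of components, but non-saturated ones might appear and disappear as $n$ grows. I would control this as follows: any component of $\mc{G}_m$ for $m$ large has a finite piece pulled back from $\mc{G}_{n}$ (the morphisms are surjective on minimal trees), whose preimage meets a component or simplicial part of $\mc{G}_n$. Appearance of a new component from simplicial arcs therefore increases the count after saturation, which is impossible beyond $n_1$. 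This yields part~(3) (surjectivity onto components) and, with the first step, part~(1). The stabiliser equality in (2) follows from $f_n=f_m\o f^m_n$, injectivity on $U$, and the fact that $f^m_n(gU)=f^m_n(U)$ forces $gU=U$ once the orbit count is stable.

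I expect the main obstacle to be the step ruling out components of $\mc{G}_m$ that arise from simplicial parts of $\mc{G}_n$, or that arise only temporarily without ever becoming saturated. I would first try to reduce this to \cite[Proposition~A.6 and Appendix~A]{Guir-Fourier} together with finite generation of arc stabilisers.
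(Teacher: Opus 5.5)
You correctly show that, for $m$ large, the component $V_m$ of $\mc{G}_m$ containing $f^m_n(U)$ is saturated, using finite generation of $K_U$ and indecomposability; the paper's proof begins the same way. The gap is in everything after that, which is where the content of the lemma lies. You assert that for every saturated $U$ and every $m>n$, $f^m_n|_U$ is injective, $f^m_n(U)$ is an entire component of $\mc{G}_m$, and $G_U$ has finite index in its stabiliser. None of this is established:
\begin{itemize}
\item Trivial arc-stabilisers and indecomposability of $G_U/K_U\acts U$ give you no mechanism preventing $f^m_n$ from identifying two directions at a point of $U$.
\item Nothing you say prevents $V_m\supsetneq f^m_n(U)$.
\item The ``result'' you quote, that an indecomposable geometric subtree with finitely generated stabiliser is a component of any geometric tree it maps into, is not something you can cite.
\end{itemize}
The paper never claims these facts for all $m>n$. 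It only has $f^m_n(U)\sq V_m$. It then imports from \cite[Lemma~A.7]{Guir-Fourier} (resting on \cite[Theorem~A.11]{Guir-Fourier}) an index $n_U$ such that $f^m_k|_{V_k}\colon V_k\ra V_m$ is an isometry for all $m\geq k\geq n_U$, and $G_{V_k}=G_{V_m}$ unless $V_k$ is axial. That eventual stabilisation is the missing ingredient. The bound $N$ of \Cref{lem:fin_many_stable} is used only afterwards, to make the $n_U$ uniform, and part~(3) then follows.

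Your route to equality of stabilisers also fails on its own terms. Stability of the number of $G$--orbits of components cannot detect two distinct components $U\neq gU$ in the same orbit being mapped onto one component of $\mc{G}_m$. That is exactly what $G_U\lneq G_{f^m_n(U)}$ means. In the axial case, which Guirardel's lemma leaves open, the paper needs a separate argument. If $G_{V_k}\lneq G_{V_m}$, then $V_m$ is also axial, so $G_{V_m}/K_U$ is abelian and $G_{V_k}\lhd G_{V_m}$. Hence $G_{V_m}$ preserves the $G_{V_k}$--minimal line $V_k\sq\mc{G}_k$, contradicting the proper inclusion. Finally, as you acknowledge, you do not handle components of $\mc{G}_m$ arising from simplicial parts. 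Your monotone-count sketch for this rests on the unproved injectivity and distinctness claims, so part~(3) is not established either.
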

\begin{proof}
For all integers $m>n$ and each indecomposable component $U\sq\mc{G}_n$, the image $f^m_n(U)\sq\mc{G}_m$ is indecomposable and, therefore, there exists an indecomposable component $V_m\sq\mc{G}_m$ such that $f^m_n(U)\sq V_m$ and $G_U\leq G_{V_m}$ (see again \cite[Section~1.6]{Guir-Fourier}). We have already seen that the image $f_n(U)\sq T$ is indecomposable and that all its arcs are stable with the same $G$--stabiliser $K_U\in\mc{F}$, which is finitely generated by Condition~$(ii)$. It follows that, for large enough $m$, an arc of $f_n(U)\sq T$ lifts to an arc of $f^m_n(U)\sq\mc{G}_m$ with precisely $K_U$ as its $G$--stabiliser. As a consequence, the indecomposable component $V_m\sq\mc{G}_m$ is saturated. Summing up, for every $n$ and each indecomposable component $U\sq\mc{G}_n$, the image $f^m_n(U)$ is contained in a saturated indecomposable component $V_m\sq\mc{G}_m$ for all sufficiently large $m$ (depending on $n$ and $U$).

Now, suppose that $U\sq\mc{G}_n$ is saturated to begin with, so that we obtain an indecomposable action with trivial arc-stabilisers $G_U/K_U\acts U$. By \cite[Lemma~A.7]{Guir-Fourier}, there exists an integer $n_U\geq n$ such that, for all $m\geq k\geq n_U$, the map $f^m_k|_{V_k}\colon V_k\ra V_m$ is an isometry (the hypotheses in Guirardel's lemma are slightly different from ours, but the key point is the same, namely \cite[Theorem~A.11]{Guir-Fourier}). In addition, Guirardel's lemma shows that either $V_k$ is axial or $G_{V_k}=G_{V_m}$. In fact, in our situation, we cannot have\footnote{The situation for axial components is more delicate in Guirardel's paper because the geometric approximations are acted upon by different groups there, as they arise from an action on an $\R$--tree of a possibly non-finitely-presented group $G$. In our setting, axial components do not need to be treated as an exception.} a proper inclusion $G_{V_k}\lneq G_{V_m}$ even when $V_k$ is axial: indeed, $V_m$ would also be axial, so the quotient $G_{V_m}/K_U$ would be abelian, which would imply that $G_{V_k}\lhd G_{V_m}$; hence, looking at the action $G\acts\mc{G}_k$, it would follow that $G_{V_m}$ preserves the line $V_k$. 

Summing up, for each indecomposable component $U\sq\mc{G}_n$, there exists an integer $n_U$ such that parts~(1) and~(2) of the lemma hold for $m\geq n_U$, for the indecomposable components of the $\mc{G}_m$ containing $f^m_n(U)$. Since the number of $G$--orbits of saturated indecomposable components in the $\mc{G}_n$ is uniformly bounded by \Cref{lem:fin_many_stable}, we get a uniform upper bound to the required integers $n_U$, proving parts~(1) and~(2) of the lemma. Part~(3) then immediately follows from this.
\end{proof}

The following proves part~(2) of \Cref{thm:acc_implies_nice} (note that, since arc-stabilisers are root-closed by Condition~$(ii)$, the $G$--stabiliser of every non-branch point of $T$ lies in $\mc{F}$). 

\begin{lem}\label{lem:branch_points}
    There are only finitely many $G$--orbits of branch points $p\in T$. For each $p$, the $G$--stabiliser of $p$ acts cofinitely on the set of directions at $p$.
\end{lem}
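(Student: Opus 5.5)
Since the branch points of $T$ are limits of structure already visible in the geometric approximations, the plan is to transfer the question to the dual simplicial trees $\mc{D}_n$ for $n\geq n_0$ (as in \Cref{lem:ind_stabilise}) and bound everything there by accessibility. First I show that every branch point of $T$ and every direction at it is witnessed in some $\mc{G}_n$. Take a branch point $p\in T$ and three directions at $p$, represented by arcs $[p,a],[p,b],[p,c]$. Using BF-stability (Condition~$(iii)$), I shorten these arcs so that each is stable with a finitely generated stabiliser (Condition~$(ii)$). By strong convergence, the finite tripod they span lifts isometrically to $\mc{G}_n$ for large $n$. This gives a branch point $\tilde p\in\mc{G}_n$ with $f_n(\tilde p)=p$, together with three distinct directions at $\tilde p$ mapping to distinct directions at $p$. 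In the other direction, a branch point of $\mc{G}_n$ either lies in the interior of an indecomposable component or is a white vertex of $\mc{D}_n$.

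Second, I bound the number of $G$--orbits of white vertices of degree $\geq 3$ in $\mc{D}_n$, uniformly in $n$, using accessibility. Suppose $\mc{D}_n$ has many orbits of such vertices $w$ whose $G$--stabilisers are not arc-stabilisers of adjacent simplicial arcs. I form a splitting over $\mc{F}_{\rm int}\cup{\rm Ess}(\mc{F},\mscr{E}_n)$ exactly as in \Cref{lem:fin_many_stable}. For this I blow up the exotic and surface vertices, then collapse edges so as to keep one edge per relevant orbit, and remove redundant degree--$2$ vertices. The edge groups of this splitting are stabilisers of points of components, which by \Cref{lem:ind_stabilise} are saturated, so they are extensions of $K_U$ by point-stabilisers of $G_U/K_U\acts U$. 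In the surface case these are peripheral, and in the exotic case they are free factors; either way the blow-ups let me replace them by edges in $\mc{F}$ or in the essential families. By \Cref{lem:unconditional_vs_conditional_acc}, the number of such orbits is bounded by a constant $N$ independent of $n$. Inside each indecomposable component, the surface and exotic types have finitely many orbits of singular points, and the number of orbits of components is itself bounded by \Cref{lem:fin_many_stable}. This gives a uniform bound on the orbits of branch points of $\mc{G}_n$, and of directions at them modulo stabilisers.

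Third, I push the bound to $T$. If $T$ had $N+1$ pairwise inequivalent branch points, the first step would realise all of them simultaneously in one $\mc{G}_n$. Their lifts would then be pairwise $G$--inequivalent, because $f_n$ is equivariant. This contradicts the second step. For directions, fix $p$ and suppose $G_p$ has infinitely many orbits of directions at $p$. Lifting finitely many representatives of these orbits to a common $\mc{G}_n$ produces a point $\tilde p$ with many directions. I need these directions to be inequivalent under $G_{\tilde p}\leq G_p$, and they are, since directions inequivalent under $G_p$ are a fortiori inequivalent under the subgroup $G_{\tilde p}$. This again gives a contradiction once the count exceeds the uniform bound.

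The main obstacle is the second step: producing a genuinely irredundant splitting whose edge groups lie in the allowed family while keeping track of the white vertices. The risk is that edge groups of $\mc{D}_n$ incident to simplicial arcs are arc-stabilisers in $\mc{G}_n$, which are only subgroups of elements of $\mc{F}$. To handle this, I would first pass to large $n$ so that the relevant finitely many stable arc-stabilisers are realised exactly; \Cref{lem:fin_many_stable} guarantees there are only finitely many conjugacy classes to control. I would then use root-closedness to rule out degree--$2$ redundancies caused by finite-index inclusions.
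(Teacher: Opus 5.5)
\textbf{There is a genuine gap in Step~2.} Your overall strategy matches the paper's: lift a finite configuration of branch points and directions of $T$ into one $\mc{G}_n$, then bound it using accessibility together with the structure of indecomposable components. Your Steps~1 and~3 are fine, including the argument that $G_{\tilde p}$-inequivalence of the lifted directions follows from $G_p$-inequivalence. The problem is Step~2, which carries all the content.

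You claim a bound, uniform in $n$, on orbits of white-vertex branch points of $\mc{G}_n$, obtained from accessibility over $\mc{F}_{\rm int}\cup{\rm Ess}(\mc{F},\mscr{E}_n)$. But the relevant edges of $\mc{D}_n$ do not have stabilisers in that family.
\begin{itemize}
\item An edge from a white vertex to a simplicial arc $\sigma$ of $\mc{G}_n$ has stabiliser $G_\sigma$. This is only contained in an element of $\mc{F}$ and need not lie in $\mc{F}_{\rm int}$. Passing to large $n$ so that the finitely many conjugacy classes of stable arc-stabilisers are realised does not help, since $\mc{G}_n$ may still contain other simplicial arcs with smaller stabilisers.
\item An edge at a surface or exotic component $U$ whose point has nontrivial image in $G_U/K_U$ has as stabiliser a peripheral or free-factor extension of $K_U$. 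This lies in neither $\mc{F}_{\rm int}$ nor ${\rm Ess}$.
\end{itemize}
Your remedy, blowing these vertices up and collapsing the old edges, destroys the correspondence between edge orbits and white-vertex orbits that you are trying to count, so no contradiction with accessibility results. Separately, your case split misses branch points of $\mc{G}_n$ that are white vertices of degree $2$ in $\mc{D}_n$. An example is a regular point of a component with one simplicial arc attached: it is neither a singular point of the component nor a white vertex of degree $\geq 3$.

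\textbf{How the paper avoids this.} It never bounds all branch points of $\mc{G}_n$. It first fixes branch points $b_1,\dots,b_k$ and directions in distinct orbits. Only then does it choose $n\geq n_0$ so that the arcs $\beta_i^j$ lift with the \emph{same} stabilisers, which therefore lie in $\mc{F}$. It then counts only the lifted arcs, in three classes:
\begin{enumerate}
\item Arcs inside components, at branch points of those components. There are at most $N_2N_3$ of these, by the Rips-machine structure constants; no accessibility is needed.
\item Arcs of simplicial kind at points meeting at least three simplicial arcs. There are at most $2N_1$ of these, by accessibility over $\mc{F}$. This is legitimate precisely because these particular arcs have stabilisers in $\mc{F}$.
\item Arcs at points where a component meets another component or a simplicial arc. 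Here the $G_U$-orbits of exit points of $U$ number at most $N_1+N_3$: those with stabiliser $K_U\in\mc{F}$ are bounded by accessibility, and those with nontrivial image in $G_U/K_U$ by $N_3$. Each such point carries at most $2+2N_2$ of the arcs.
\end{enumerate}
To repair your proof, restructure Step~2 along these lines, so that the uniform constants bound the lifted configuration rather than $\mc{G}_n$ itself.
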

\begin{proof}
    Let $N_1$ be the largest number of edge orbits in a reduced splitting of $G$ over $\mc{F}$, which exists by Condition~$(i)$. Let $n_0\in\N$ be as in \Cref{lem:ind_stabilise}. Lemmas~\ref{lem:fin_many_stable} and~\ref{lem:ind_stabilise} show that, for all $n\geq n_0$, the geometric tree $\mc{G}_n$ has $N_2$ orbits of indecomposable components, for a fixed integer $N_2$. Furthermore, it follows from \cite{Guir-CMH,Guirardel-complexity} that there exists an integer $N_3$ such that, for each indecomposable component $U\sq\mc{G}_n$, the action $G_U/K_U\acts U$ has at most $N_3$ orbits of directions at branch points, and at most $N_3$ orbits of points with nontrivial $(G_U/K_U)$--stabiliser. 
    Finally, observe that, for each indecomposable component $U\sq\mc{G}_n$, the vertex $[U]\in\mc{D}_n$ is incident to edges of $\mc{D}_n$ in at most $N_1+N_3$ distinct $G_U$--orbits. Indeed, such edges correspond to points of $U$ where we can exit $U$, and the $G_U$--stabiliser of these points either equals $K_U\in\mc{F}$ or projects to a nontrivial subgroup of $G_U/K_U$. There are at most $N_1$ $G_U$--orbits of the former points, by accessibility, 
    and at most $N_3$ $G_U$--orbits of the latter points, by the definition of $N_3$.

    Now, let $b_1,\dots,b_k\in T$ be branch points in distinct $G$--orbits. For each $i$, consider an integer $m_i\geq 3$ and connected components $\kappa_i^1,\dots,\kappa_i^{m_i}\sq T\setminus\{b_i\}$ in distinct orbits under the $G$--stabiliser of $b_i$. Choose arcs $\beta_i^j\sq T$ based at $b_i$ with $\beta_i^j\setminus\{b_i\}\sq\kappa_i^j$. Our goal is to bound the sum $\sum_im_i$.
    
    Choose $n\geq n_0$ such that the finite forest $\bigcup_{i,j}\beta_i^j$ lifts isometrically to $\mc{G}_n$, and such that each arc $\beta_i^j$ lifts to an arc $\overline\beta_i^j\sq\mc{G}_n$ with the same $G$--stabiliser as $\beta_i^j$ (hence lying in $\mc{F}$). Call $b_i'\in\overline\beta_i^j$ the lift of $b_i$. Note that the points $b_i'$ are in distinct $G$--orbits and, for each $i$, the directions of $\mc{G}_n$ represented by the arcs $\overline\beta_i^j$ are in distinct orbits under the $G$--stabiliser of $b_i'$. Up to shrinking the arcs $\beta_i^j$, we can assume that each $\overline\beta_i^j$ is contained either in an indecomposable component of $\mc{G}_n$ or in a simplicial arc of $\mc{G}_n$; we then say that $\overline\beta_i^j$ is of \emph{indecomposable/simplicial kind}.
    
    It is convenient to partition the set $\{\overline\beta_i^j\}_{i,j}$ into the following three classes to count its elements.
    \begin{enumerate}
    \setlength\itemsep{.25em}
        \item Arcs contained in indecomposable components and based at one of their branch points. There are at most $N_2N_3$ of these.
        \item Arcs of simplicial kind based at points of $\mc{G}_n$ that lie in at least three distinct simplicial arcs. By accessibility, there are at most $2N_1$ of these arcs.
        \item All remaining arcs (of simplicial or indecomposable kind). These are based at points $p\in\mc{G}_n$ that are shared either by at least two indecomposable components, or by an indecomposable component and at least one simplicial arc; there are at most $N_2(N_1+N_3)$ orbits of such points $p$ under the $G$--action. In addition, the arcs in this third family are either contained in an indecomposable component $U\ni p$ such that $p$ has degree $2$ in $U$, or they are one of at most $2$ simplicial arcs based at $p$. Thus, there are at most $2+2N_2$ such arcs at the point $p$. 
    \end{enumerate}
    In conclusion, we obtain 
    \[ \sum_im_i\leq 2N_1+N_2N_3 + (2+2N_2)N_2(N_1+N_3) ,\] 
    which is the desired (albeit non-sharp) uniform bound on the total number of $G$--orbits of directions at branch points of $T$.
\end{proof}

The rest of the section is concerned with the proof of finite generation of point-stabilisers of $T$. Up to discarding finitely many $\mc{G}_n$, we assume in the following discussion that $n_0=0$, for the integer $n_0$ provided by \Cref{lem:ind_stabilise}. Denoting by $\mc{F}_{\rm stab}\sq\mc{F}$ the family of $G$--stabilisers of stable arcs of $T$, and recalling that $\mc{F}_{\rm stab}$ consists of finitely many $G$--conjugacy classes of finitely generated subgroups by \Cref{lem:fin_many_stable} and Condition~$(ii)$, we can similarly assume that all elements of $\mc{F}_{\rm stab}$ are elliptic in all $\mc{G}_n$. Since $T$ is BF--stable, every element of $\mc{F}$ is contained in an element of $\mc{F}_{\rm stab}$, and it follows that $\mc{F}\sq\mscr{E}_n$ for all $n$.

Denote by ${\rm Ind}$ the family of subgroups of $G$ arising as stabilisers of indecomposable components of the $\mc{G}_n$; by Lemmas~\ref{lem:fin_many_stable} and~\ref{lem:ind_stabilise}, ${\rm Ind}$ also consists of finitely many $G$--conjugacy classes of subgroups. For each $I\in{\rm Ind}$, we denote by $\partial I$ the family of subgroups of $I$ arising as (entire) point-stabilisers in the action $I\acts U$, where $U$ is the indecomposable component associated to $I$ in one/all $\mc{G}_n$ (by \Cref{lem:ind_stabilise}, the value of $n$ plays no role). We denote by $K_I\lhd I$ the kernel of the action $I\acts U$ (also denoted $K_U$ above). We clearly have $K_I\in\mc{F}$ for all $I\in{\rm Ind}$. Finally, we write $\partial{\rm Ind}:=\bigcup_{I\in{\rm Ind}}\partial I$.

It is convenient to give a name to the following type of splittings of $G$.

\begin{defn}\label{defn:excellent}
    A splitting $G\acts\Delta$ is \emph{excellent} if it satisfies the following properties:
    \begin{enumerate}
        \item for each $I\in{\rm Ind}$, there exists a vertex $x_I\in\Delta$ such that the $G$--stabiliser of $x_I$ is $I$, the $G$--stabiliser of every edge of $\Delta$ incident to $x_I$ lies in $\partial I$, and at most one edge incident to $x_I$ has a given element of $\partial I\setminus\{K_I\}$ as its stabiliser;
        \item if an edge $e\sq\Delta$ is not incident to $x_I$ for any $I\in{\rm Ind}$, then the $G$--stabiliser of $e$ lies in $\mc{F}_{\rm int}$;
        \item all subgroups in $\mscr{E}$ are elliptic in $\Delta$;
        \item if a subgroup of $G$ is elliptic in $\Delta$, then it lies in $\mscr{E}$ or is contained in an element of ${\rm Ind}$.
    \end{enumerate}
\end{defn}

Our next goal is showing that $G$ indeed admits an excellent splitting. For this, we first need the following observation.

\begin{lem}\label{lem:int_in_int}
    Let $G\acts S$ be a splitting relative to $\mc{F}$ satifying Items~(1) and~(2) in \Cref{defn:excellent}. For any vertex $x\in S$ with $G_x\not\in{\rm Ind}$ and any subgroup $F\in\mc{F}_{\rm int}$, we have $G_x\cap F\in\mc{F}_{\rm int}$.
\end{lem}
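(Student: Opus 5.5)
The plan is to reduce to a single arc-stabiliser and then analyse the first edge of $S$ on the way from $x$ to its fixed set. Write $F=F_1\cap\dots\cap F_k$ with each $F_i\in\mc{F}$. Then $G_x\cap F=\bigcap_i(G_x\cap F_i)$, and $\mc{F}_{\rm int}$ is closed under finite intersections. So it suffices to show $G_x\cap F_i\in\mc{F}_{\rm int}$ for a single $F_i\in\mc{F}$, which I rename $F$.

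Since $S$ is relative to $\mc{F}$, the group $F$ fixes a nonempty subtree $\Fix(F)\sq S$. If $x\in\Fix(F)$, then $G_x\cap F=F\in\mc{F}$ and we are done. Otherwise let $\sigma$ be the geodesic from $x$ to the projection of $x$ to $\Fix(F)$. Any element of $G_x\cap F$ fixes $x$ and a point of $\Fix(F)$, hence fixes $\sigma$ pointwise. Let $e$ be the first edge of $\sigma$, so that $G_x\cap F=G_e\cap F$ (the inclusion $G_e\cap F\leq G_x$ being clear), and let $y$ be the other endpoint of $e$.

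\emph{Case A: $e$ is not incident to any vertex $x_I$.} Then $G_e\in\mc{F}_{\rm int}$ by Item~(2), so $G_e\cap F\in\mc{F}_{\rm int}$.

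\emph{Case B: $e$ is incident to some $x_I$.} It cannot be that $x=x_I$, since $G_x\not\in{\rm Ind}$; hence $y=x_I$. By Item~(1), $G_e=I_p$ for a point $p$ of the indecomposable component $U$ associated to $I$. Also $K_I\leq G_e$, since $K_I$ fixes $U$ pointwise. Two subcases arise.

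\emph{Case B1: $y\notin\Fix(F)$.} Let $e'$ be the next edge of $\sigma$; it is incident to $x_I$, distinct from $e$, and $G_{e'}=I_{p'}$ for some $p'\in U$. If $G_e$ and $G_{e'}$ both lie in $\partial I\setminus\{K_I\}$, then Item~(1) forces $G_e\neq G_{e'}$, hence $p\neq p'$. In that situation $G_e\cap G_{e'}$ fixes the nondegenerate arc $[p,p']\sq U$. As $U$ is saturated (\Cref{lem:ind_stabilise}), this arc-stabiliser is $K_I$, so $G_e\cap G_{e'}\leq K_I$. If instead one of $G_e,G_{e'}$ equals $K_I$, the inclusion $G_e\cap G_{e'}\leq K_I$ is immediate. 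In either case $G_x\cap F=F\cap G_e\cap G_{e'}=F\cap K_I$, using $K_I\leq G_e$. Since $F\cap K_I\in\mc{F}_{\rm int}$, we are done.

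\emph{Case B2: $y\in\Fix(F)$, i.e.\ $F\leq I$.} This is the step where the tree $S$ alone gives no information, so I pass to the geometric approximations. Recall that $\mc{F}\sq\mscr{E}_n$, so $F$ fixes a point of $\mc{G}_n$. Since $F\leq I=G_U$ also preserves $U$, it fixes the projection $q$ of that point to the closed subtree $U$. If $q=p$, then $F\leq I_p=G_e$, so $G_x\cap F=F$. If $q\neq p$, then $F\cap G_e$ fixes the arc $[p,q]\sq U$, so $F\cap G_e\leq K_I$ and therefore $F\cap G_e=F\cap K_I\in\mc{F}_{\rm int}$.

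The main obstacle is Case~B, where $G_e$ is a possibly large point-stabiliser of $U$ rather than an element of $\mc{F}_{\rm int}$. It is handled by the uniqueness clause of Item~(1) together with saturation, which ensures that arc-stabilisers of $U$ equal $K_I$. The choice of $n$ plays no role, by \Cref{lem:ind_stabilise}.
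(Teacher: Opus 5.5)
Your proof is correct and follows the paper's argument essentially step by step. You take the first edge $e$ from $x$ towards $\Fix(F)$ and invoke Item~(2) when $e$ is not incident to an $x_I$ (equivalently, when $G_e\in\mc{F}_{\rm int}$). Otherwise you split according to whether $F$ fixes $x_I$, using a point of $U$ fixed by $F$, or does not, using the next edge together with the uniqueness clause of Item~(1) and saturation to get $G_e\cap G_{e'}=K_I$.

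Your preliminary reduction to a single $F_i\in\mc{F}$ is harmless but unnecessary. Every element of $\mc{F}_{\rm int}$ lies in an element of $\mc{F}$, so it is already elliptic in $S$ and in the $\mc{G}_n$. On the positive side, you make explicit the use of Item~(1) and saturation, which the paper leaves implicit.
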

\begin{proof}
    Set $\Om:=G_x\cap F$ for simplicity. Since $S$ is a splitting relative to $\mc{F}$, there exists a vertex $y\in S$ fixed by $F$. If $F\leq G_x$, then $\Om$ coincides with $F$ and lies in $\mc{F}_{\rm int}$. Suppose instead that $F\not\leq G_x$ in the rest of the proof. Thus, we have $y\neq x$ and, letting $e\sq S$ be the edge incident to $x$ in the direction of $y$, we have $\Om=G_e\cap F$. If $G_e\in\mc{F}_{\rm int}$, we again have $\Om\in\mc{F}_{\rm int}$. 
    
    Suppose instead that $G_e\not\in\mc{F}_{\rm int}$. Let $z$ be the vertex of $e$ other than $x$, and note that its $G$--stabiliser must be some $I\in{\rm Ind}$ (as $G_x\not\in{\rm Ind}$ by hypothesis). Let $I\acts U$ be the corresponding indecomposable component of the $\mc{G}_n$, and let $\overline e\in U$ be a point with $G_e$ as its $I$--stabiliser. If $y=z$, then $F\leq I$ and, since we have assumed that $\mc{F}\sq\mscr{E}_n$ for all $n$, there is a point of $U$ fixed by $F$; this point must be distinct from $\overline e$ (otherwise we would have $F\leq G_e\leq G_x$), and this implies that $\Om=G_e\cap F=K_I\cap F\in\mc{F}_{\rm int}$. If instead $y\neq z$, let $f\sq S$ be the edge incident to $z$ in the direction of $y$. Again, there exists a point $\overline f\in U$ with $G_f$ as its $I$--stabiliser, and hence $\Om=F\cap (G_e\cap G_f)=F\cap K_I\in\mc{F}_{\rm int}$, as we wanted.
\end{proof}

We can now prove that excellent splittings indeed exist, provided that the $\mc{G}_n$ are not single indecomposable components. We will quickly complete the proof of \Cref{thm:acc_implies_nice} once this is shown.

\begin{prop}\label{prop:excellent_exists} 
    If $G\not\in{\rm Ind}$, there exists an excellent splitting $G\acts\Delta$.
\end{prop}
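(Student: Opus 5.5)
We construct $\Delta$ as a limit of splittings built from the dual trees $\mc{D}_n$, using accessibility to stop the process. By \Cref{lem:ind_stabilise}, with $n_0=0$, the indecomposable components of all $\mc{G}_n$ are saturated and correspond to one another under the $f^m_n$, with equal stabilisers.

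\textbf{Step 1: adapting each $\mc{D}_n$.} For each $n$, modify $\mc{D}_n$ into a splitting $\Delta_n$ satisfying Items~(1) and~(2) of \Cref{defn:excellent}.
\begin{itemize}
    \item Keep the black vertices $[U]$, so that $x_I=[U]$ has stabiliser $I$.
    \item Each edge incident to $[U]$ has stabiliser the $I$--stabiliser of a point of $U$, so it lies in $\partial I$.
    \item For a given element of $\partial I\setminus\{K_I\}$, a point of $U$ with nontrivial stabiliser in $I/K_I$ is unique (axial: none; surface: the boundary fixed points; exotic: free factors). So the multiplicity condition in Item~(1) can be arranged by folding or collapsing at the adjacent white vertex.
    \item The remaining edges are incident to black vertices of simplicial arcs. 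Their stabilisers are arc or point stabilisers of simplicial arcs of $\mc{G}_n$. These are elliptic in $T$, and lie in $\mc{F}$ after collapsing edges at white vertices whose stabiliser is not in $\mc{F}_{\rm int}$. \Cref{lem:int_in_int} is the tool for checking that intersections produced by collapsing stay in $\mc{F}_{\rm int}$.
\end{itemize}
Each $\Delta_n$ is then relative to $\mscr{E}_n$. Its elliptic subgroups that are not contained in an element of ${\rm Ind}$ are elliptic in $\mc{G}_n$, hence lie in $\mscr{E}_n\sq\mscr{E}$. This gives Item~(4) for every $\Delta_n$. The hypothesis $G\not\in{\rm Ind}$ guarantees that the trees are nontrivial.

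\textbf{Step 2: stabilising the deformation spaces.} As $n$ grows, $\mscr{E}_n$ increases. To get monotone deformation spaces, I would replace $\Delta_n$ by a common refinement $\Delta'_n$ of $\Delta_0,\dots,\Delta_n$, via \Cref{lem:ref_dom} as in \Cref{lem:defspaces_stabilise}. Rather, since we want the \emph{coarsest} trees (the ones in which more subgroups are elliptic), I would note that each $f^m_n$ induces a domination: $\Delta_m$ is dominated-by-ellipticity, i.e.\ every subgroup elliptic in $\Delta_n$ is elliptic in $\Delta_m$.

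The key claim is that there are only boundedly many distinct deformation spaces in this chain. The argument would follow \Cref{lem:defspaces_stabilise}(2), applied to the family $\mc{F}_{\rm int}\cup\partial{\rm Ind}$. This needs unconditional accessibility over edge groups in $\mc{F}_{\rm int}$ together with the edges at the $x_I$. I would get it from Condition~(i), via \Cref{lem:unconditional_vs_conditional_acc} and Condition~(iii). For surface components, the edges at $x_I$ with stabiliser an extension of $K_I$ by a boundary cyclic group can be blown up at $x_I$ into edges in ${\rm Ess}(\mc{F},\emptyset)$, or counted as in \Cref{lem:fin_many_stable}. The number of such edge orbits is bounded by $N_2(N_1+N_3)$ from \Cref{lem:branch_points}.

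Hence the deformation spaces of the $\Delta_n$ are eventually constant, equal to that of some $\Delta:=\Delta_{n_1}$.

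\textbf{Step 3: Item~(3).} Let $E\in\mscr{E}$. If $E$ is finitely generated, strong convergence puts $E\in\mscr{E}_n$ for large $n$, so $E$ is elliptic in $\Delta_n$ and hence in $\Delta$. In general, write $E$ as an increasing union of finitely generated subgroups. Each is elliptic in $\Delta$, so $E$ either fixes a point of $\Delta$ or fixes an end of $\Delta$. The end case is excluded by Condition~(iii): bounded chains of edge groups in $\mc{F}_{\rm int}$, together with \Cref{lem:FK} applied to $\Delta$, force $E$ to fix a point.

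\textbf{Main obstacle.} I expect the main obstacle to be Step~2: ensuring that the modifications in Step~1 produce edge groups in a family over which accessibility applies. The subtle points are the edges at surface-type $x_I$ whose groups are not in $\mc{F}_{\rm int}$, and making the stabilisation argument work even though these trees do not literally refine one another.
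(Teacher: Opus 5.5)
There is a genuine gap, and it is in Step~1 rather than Step~2. The edges of $\mc{D}_n$ at simplicial-arc vertices have as stabilisers pointwise stabilisers of simplicial arcs of $\mc{G}_n$. These are in general proper subgroups of arc-stabilisers of $T$, not elements of $\mc{F}_{\rm int}$, since the simplicial parts of the $\mc{G}_n$ never stabilise. To obtain Item~(2) you are therefore forced to collapse such edges. The paper does this too, collapsing all edges not incident to some $[U]$, but only to show that the class of splittings satisfying Items~(1)--(2) relative to $\mscr{E}_k$, call it $\mscr{S}_k$, is nonempty.

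After collapsing, a vertex group is the stabiliser of a whole subtree of $\mc{D}_n$, i.e.\ of a connected piece of the simplicial part of $\mc{G}_n$. Such a group typically contains elements loxodromic in $\mc{G}_n$ and in $T$. Your sentence ``its elliptic subgroups not contained in an element of ${\rm Ind}$ are elliptic in $\mc{G}_n$'' holds for $\mc{D}_n$ but not for the collapsed tree, so Item~(4) is not established for any $\Delta_n$. For the same reason, the monotonicity you read off from $f^m_n$ is only valid for the raw trees $\mc{D}_n$, not for your modified ones.

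The missing idea is to enforce Item~(4) by maximality, not by construction: at each fixed level $k$ you want the \emph{finest} splitting in $\mscr{S}_k$, not the coarsest.

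\textbf{Common refinements stay in the class.} One first shows that any $S_1,S_2\in\mscr{S}_k$ admit a refinement of $S_1$ that dominates $S_2$, stays in $\mscr{S}_k$, and has all new edge groups in $\mc{F}_{\rm int}$. This uses \Cref{lem:int_in_int} together with the multiplicity condition in Item~(1): two distinct points of $U$ have stabilisers meeting in $K_I$. This step is what replaces your appeal to \Cref{lem:defspaces_stabilise}. That lemma cannot be run on $\mc{F}_{\rm int}\cup\partial{\rm Ind}$, because that family is not intersection-closed and $G$ is not known to be accessible over $\partial I$ for exotic $I$.

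\textbf{A maximal splitting.} Unconditional accessibility over $\mc{F}_{\rm int}$ then gives a $\Delta_k\in\mscr{S}_k$ dominating all of $\mscr{S}_k$.

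\textbf{Item~(4) by contradiction.} Suppose a vertex group $G_x\notin{\rm Ind}$ of $\Delta_k$ were not elliptic in $T$. Take $g\in G_x$ loxodromic in $T$, a stable arc $\beta$ on its axis, and lift $\beta$ to $\mc{G}_n$ for some large $n\geq k$ with the same stabiliser. Either the lift lies in a simplicial arc, which yields a splitting over $G_\beta\in\mc{F}$. Or it meets an indecomposable $U$; then at most one exit point of the axis has stabiliser meeting $G_x$ outside $K_U$, which yields an edge $e$ with $G_x\cap G_e\in\mc{F}_{\rm int}$. Either way $G_x$ splits over an element of $\mc{F}_{\rm int}$ relative to $\mscr{E}_n\supseteq\mscr{E}_k$. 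So $\Delta_k$ could be properly refined inside $\mscr{S}_k$, contradicting maximality.

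With this in place, monotonicity is automatic from $\mscr{S}_k\supseteq\mscr{S}_m$. Stabilisation follows from accessibility, and your Step~3 then goes through as in the paper.
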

\begin{proof}
    Let $\mscr{S}$ be the family of splittings of $G$ that satisfy Items~(1) and~(2) in \Cref{defn:excellent}. For each $k\in\N$, let $\mscr{S}_k\sq\mscr{S}$ be the subset of splittings relative to $\mscr{E}_k$. We have $\mscr{S}_k\supseteq\mscr{S}_m$ for $m>k$, as $\mscr{E}_k\sq\mscr{E}_m$. 

    We claim that $\mscr{S}_k\neq\emptyset$ for all $k\in\N$. If ${\rm Ind}\neq\emptyset$, we can for instance consider the tree $G\acts\mc{D}_k$ defined above, and collapse all $G$--orbits of edges of $\mc{D}_k$ that are not incident to any black vertex representing an indecomposable component of $\mc{G}_k$. (Note that $\mc{D}_k$ is not a single vertex because $G\not\in{\rm Ind}$.) 
    If instead ${\rm Ind}=\emptyset$, 
    we can consider any stable arc $\beta\sq T$ and choose $n>k$ so that $\mc{G}_n$ has a (necessarily simplicial) arc $\beta'$ with stabiliser $G_{\beta}\in\mc{F}$; we then collapse all connected components of $\mc{G}_n\setminus\bigcup_{g\in G}g\beta'$. Either way, the collapse lies in $\mscr{S}_k$, showing that $\mscr{S}_k\neq\emptyset$.
    
    In order to progress with the proof of the proposition, we will need the following observation.

    \smallskip
    {\bf Claim~1.} \emph{For any two splittings $S_1,S_2\in\mscr{S}_k$, there exists a third splitting $S\in\mscr{S}_k$ that refines $S_1$ and dominates $S_2$.}

    \smallskip\noindent
    \emph{Proof of Claim~1.}
    This essentially follows from \Cref{lem:ref_dom}, except that we need to check that the refinement of $S_1$ constructed there truly lies in $\mscr{S}$. For this, consider a vertex $x\in S_1$ such that $G_x$ is not elliptic in $S_2$, and note that this implies that $G_x\not\in{\rm Ind}$. Also note that $G_x$ is finitely generated because $S_1$ has finitely generated edge-stabilisers. Let $M\sq S_2$ be the $G_x$--minimal subtree. We claim that there exists an edge of $M$ whose $G_x$--stabiliser lies in $\mc{F}_{\rm int}$.

    If there exists an edge $e\sq M$ such that $G_e\in\mc{F}_{\rm int}$, then the $G_x$--stabiliser of $e$ lies in $\mc{F}_{\rm int}$ by \Cref{lem:int_in_int}.
    Thus, we can suppose that no edge of $M$ has $G$--stabiliser in $\mc{F}_{\rm int}$. Considering any edge $e\sq M$, this implies that there exists a vertex $y\in e$ whose $G$--stabiliser is some $I\in{\rm Ind}$. Note that there also exists an edge $f\sq M$ incident to $y$ with $f\neq e$. Let $I\acts U$ be the indecomposable component associated to $I$. If $y'$ is the vertex of $S_1$ fixed by $I$, we have $y'\neq x$, as we have seen above that $G_x\not\in{\rm Ind}$. Now, let $g\sq S_1$ be the edge incident to $y'$ in the direction of $x$. The $G$--stabilisers of the edges $e,f,g$ equal the $I$--stabilisers of three points of $U$, and at least one of the two subgroups $G_e$ and $G_f$ must be different from $G_g$, unless they all equal $K_I$ (recall Item~(1) in \Cref{defn:excellent}). Thus, assuming without loss of generality that $G_e\neq G_g$, we have $G_e\cap G_g=K_I\in\mc{F}$. It follows that the $G_x$--stabiliser of the edge $e\sq M$ equals $G_x\cap (G_e\cap G_g)\in\mc{F}_{\rm int}$ by \Cref{lem:int_in_int}.

    In conclusion, we have shown that there always exists an edge $e\sq M$ whose $G_x$--stabiliser lies in $\mc{F}_{\rm int}$. Collapsing all other $G_x$--orbits of edges of $M$, we obtain a one-edge splitting of $G_x\acts M'$. We can then blow up the vertex $x\in S_1$ to a copy of $M'$, thus obtaining another splitting of $G$ in $\mscr{S}_k$. We can only repeat this procedure a finite number of times, as $G$ is unconditionally accessible over $\mc{F}_{\rm int}$ by \Cref{lem:unconditional_vs_conditional_acc} and Conditions~$(i)$ and~$(iii)$. Thus, we eventually obtain a refinement of $S_1$ that lies in $\mscr{S}_k$ and dominates $S_2$ (see the proof of \Cref{lem:ref_dom} for details). 
    \hfill$\blacksquare$

    \smallskip
    Now observe that, for each $k\in\N$, there exists a splitting $\Delta_k\in\mscr{S}_k$ that dominates all other splittings in $\mscr{S}_k$. This is proved exactly as in \Cref{lem:defspaces_stabilise}(1), using the fact that $G$ is unconditionally accessible over $\mc{F}_{\rm int}$, and using Claim~1 in place of \Cref{lem:ref_dom} (we cannot directly use \Cref{lem:defspaces_stabilise}, as some edge-stabilisers of the splittings in $\mscr{S}$ do not lie in $\mc{F}_{\rm int}$).

    We now show that, for each $k\in\N$, the splitting $G\acts\Delta_k$ satisfies Item~(4) in \Cref{defn:excellent}.

    \smallskip
    {\bf Claim~2.} \emph{If $x\in\Delta_k$ is a vertex such that $G_x\not\in{\rm Ind}$, then $G_x$ is elliptic in the $\R$--tree $T$.}

    \smallskip\noindent
    \emph{Proof of Claim~2.}
    Observe that the $G$--stabilisers of the edges of $\Delta_k$ incident to $x$ lie in $\partial{\rm Ind}\cup\mc{F}_{\rm int}$; these subgroups are elliptic in $T$ and in $\mc{G}_n$ for all $n$. Suppose for the sake of contradiction that $G_x$ is not elliptic in $T$. 
    
    We will show that, for all sufficiently large integers $n$, the group $G_x$ splits over an element of $\mc{F}_{\rm int}$ that it contains, relative to $\mscr{E}_n$. This will imply that $G\acts\Delta_k$ can be refined (using \Cref{lem:blow-up}) into a splitting that still lies in $\mscr{S}_k$ and in which $G_x$ is no longer elliptic, which will contradict the fact that $\Delta_k$ dominates all splittings in $\mscr{S}_k$.

    Now, note that $G_x$ is finitely generated relative to finitely many elements of $\partial{\rm Ind}\cup\mc{F}_{\rm int}$ by \cite[Lemma~1.11]{Guir-Fourier}. 
    Thus, the fact that $G_x$ is not elliptic in $T$ implies that $G_x$ contains an element $g\in G_x$ that is loxodromic in $T$, and hence loxodromic in $\mc{G}_n$ for all $n$. Let $\alpha\sq T$ and $\alpha_n\sq\mc{G}_n$ be the axes of $g$, and choose a stable arc $\beta\sq\alpha$. For large $n$, the arc $\beta$ lifts to an arc $\beta_n\sq\alpha_n$ with the same $G$--stabiliser as $\beta$ (as the elements of $\mc{F}$ are finitely generated). Up to modifying $\beta_n$ we can assume that either $\beta_n$ is a maximal simplicial arc of $\mc{G}_n$ or $\beta_n$ is an arc of some indecomposable component $U\sq\mc{G}_n$. In the former case, we can collapse to a point every arc of $\mc{G}_n$ whose $G$--translates are all disjoint from the interior of $\beta_n$, thus obtaining a one-edge splitting of $G$ over the conjugates of $G_{\beta}\in\mc{F}$, in which the elements of $\mscr{E}_n$ are elliptic and $G_x$ is not. As $G_x\cap G_{\beta}\in\mc{F}_{\rm int}$ by \Cref{lem:int_in_int}, this yields the desired splitting of $G_x$ in this case.

    Thus, suppose that the axis $\alpha_n\sq\mc{G}_n$ shares an arc with an indecomposable component $U\sq\mc{G}_n$. Note that the intersection $G_x\cap G_U$ is elliptic in all trees $\mc{G}_m$ (as $x$ is a vertex of a splitting in $\mscr{S}$). In particular, there is at most one point $z\in U$ such that the intersection between $G_x$ and the $G_U$--stabiliser of $z$ is not contained in the kernel $K_U$. Now, consider the action $G\acts\mc{D}_n$ and let $e,f\sq\mc{D}_n$ be the two edges incident to $[U]$ corresponding to the two points where $\alpha_n$ exits $U$. By the previous discussion, up to swapping $e$ and $f$, we can assume that $G_x\cap G_e\leq K_U\in\mc{F}_{\rm int}$. It follows that, $G_x\cap G_e=G_x\cap K_U\in\mc{F}_{\rm int}$ by \Cref{lem:int_in_int}. In conclusion, after collapsing all edges of $\mc{D}_n$ outside the $G$--orbit of $e$, we again obtain a splitting of $G$ over an element of $\mc{F}_{\rm int}$, in which the elements of $\mscr{E}_n$ are elliptic and $G_x$ is not. This induces the desired splitting of $G_x$ and, as discussed at the start, it proves the claim.
    \hfill$\blacksquare$

    \smallskip
    Finally, observe that the splitting $G\acts\Delta_k$ dominates the splitting $G\acts\Delta_m$ for all $m>k$, as we have $\mscr{S}_k\supseteq\mscr{S}_m$. Using again Claim~1 and the fact that $G$ is unconditionally accessible over $\mc{F}_{\rm int}$, as in the proof of \Cref{lem:defspaces_stabilise}(2), we conclude that the deformation spaces of the trees $\Delta_n$ must stabilise for large $n$.

    In conclusion, setting $\Delta:=\Delta_k$ for a sufficiently large value of $k$, we obtain a splitting that satisfies Items~(1),~(2) and~(4) of \Cref{defn:excellent} and the following slightly weaker form of Item~(3): all subgroups in the union $\bigcup_{n\in\N}\mscr{E}_n$ are elliptic in $\Delta$.

    In order to conclude that $\Delta$ is indeed excellent, consider some $E\in\mscr{E}$. Each finitely generated subgroup of $E$ lies in $\mscr{E}_n$ for large $n$, so all finitely generated subgroups of $E$ are elliptic in $\Delta$. Since chains in $\mc{F}_{\rm int}$ are bounded by Condition~$(iii)$, it follows that $E$ is itself elliptic in $\Delta$ (see \Cref{lem:FK}). This shows that $G\acts\Delta$ is excellent, proving the proposition.
\end{proof}

Before we continue, it is important to observe that an excellent splitting can be modified (near the vertices fixed by exotic elements of ${\rm Ind}$) so as to make its edge groups finitely generated.

\begin{rmk}\label{rmk:excellent->fg_edges}
    Let $G\acts\Delta$ be an excellent splitting. Consider an edge $e\sq\Delta$. If $G_e\in\mc{F}_{\rm int}$, then $G_e$ is finitely generated by Condition~$(ii)$. If $G_e\not\in\mc{F}_{\rm int}$, then there exists a vertex $y\in e$ such that $G_y$ equals some $I\in{\rm Ind}$ and $G_e\in\partial I$. If $I$ is of axial type, then $G_e$ equals the kernel $K_I\in\mc{F}$, which is finitely generated. Similarly, if $I$ is of surface type, then $G_e\in{\rm Per}(\mc{F},\mscr{E})$, so $G_e$ is an extension of $K_I$ by a cyclic subgroup and it is again finitely generated. 
    
    Suppose instead that $I$ is of exotic type. As mentioned above, $I$ admits a splitting over $K_I$ whose vertex groups are precisely the elements of $\partial I$. 
    We can thus blow up the vertex $y\in\Delta$ to a copy of this splitting of $I$, then collapse all edges that were incident to $y$ in $\Delta$ (and their $G$--translates). This procedure does not affect the stabilisers of the vertices of $\Delta\setminus(G\cdot y)$.

    Summing up, there exists a $(\mc{F}_{\rm int}\cup{\rm Per}(\mc{F},\mscr{E}),\mscr{E})$--splitting $G\acts\Delta'$ whose vertex-stabilisers are the elements of ${\rm Ind}$ not of exotic type, the kernels of the exotic components, and the vertex-stabilisers of $\Delta$ outside ${\rm Ind}$. All edge-stabilisers of $\Delta'$ are finitely generated, and hence all vertex-stabilisers of $\Delta'$ are finitely generated as well. Moreover, edges of $\Delta'$ with stabiliser in ${\rm Per}(\mc{F},\mscr{E})\setminus\mc{F}_{\rm int}$ have a vertex whose stabiliser is an element of ${\rm Ind}$ of surface type, which in particular lies in ${\rm QH}(\mc{F},\mscr{E})$.
\end{rmk}

We can finally complete the proof of \Cref{thm:acc_implies_nice}.

\begin{proof}[Proof of \Cref{thm:acc_implies_nice}]
    We can assume throughout the proof that $G\not\in{\rm Ind}$. Otherwise, all $\mc{G}_n$ are indecomposable, and they are equivariantly isometric to each other and to $T$. In that case, all parts of the theorem follow from the above discussion of indecomposable actions. 
    
    Parts~(1) and~(2) of the theorem were shown above in Lemmas~\ref{lem:fin_many_stable} and~\ref{lem:branch_points}, respectively. (Recall that, since arc-stabilisers are root-closed by Condition~$(ii)$, the $G$--stabiliser of every non-branch point of $T$ lies in $\mc{F}$.) Thus, it suffices to discuss parts~(3) and (4).
    
    Part~(3) can be quickly deduced from \Cref{prop:excellent_exists}. Let $G\acts\Delta'$ be the modification of an excellent splitting, as constructed in \Cref{rmk:excellent->fg_edges}. Consider a point $p\in T$ and its stabiliser $G_p$. Since $G_p\in\mscr{E}$, there exists a vertex $x\in\Delta'$ such that $G_p\leq G_x$. If $G_x$ is some element $I\in{\rm Ind}$ and if $G_p$ fixes no other vertices of $\Delta'$, then $G_p$ equals the $I$--stabiliser of a point of the associated indecomposable component $U$, which is of axial or surface type. In this case, either $G_p\in\mc{F}$ or $G_p$ is a fake element of ${\rm Per}(\mc{F},\mscr{E})$, which are the two situations excluded from part~(3b) of the theorem, but they nevertheless guarantee that $G_p$ is finitely generated.

    Otherwise, we can assume that $G_x\not\in{\rm Ind}$. Thus, we have $G_x\in\mscr{E}$ and there exists a point $q\in T$ such that $G_p\leq G_x\leq G_q$. If $q\neq p$, then $G_p$ coincides with the $G$--stabiliser of the arc $[p,q]\sq T$, and hence we again have $G_p\in\mc{F}$. Finally, if $q=p$, we obtain the equality $G_p=G_x$ and, as observed in \Cref{rmk:excellent->fg_edges}, the stabiliser $G_p$ is indeed finitely generated and a vertex group of a splitting of $G$ with the properties listed in part~(3b). This proves part~(3) of the theorem.

    We finally address part~(4). Consider a subgroup $H\leq G$ that is non-elliptic in $T$. Suppose first that $H$ is elliptic in $G\acts\Delta'$, so that we have $H\leq I$ for some $I\in{\rm Ind}$ of surface or axial type. 
    
    If $I$ is of surface type, we can refine $\Delta'$ by blowing up the $I$--fixed vertex to a one-edge splitting of $I$ over an element of ${\rm Ess}(\mc{F},\mscr{E})$, relative to $\partial I$. If we then collapse all edges not created during the blow-up, we obtain a one-edge $({\rm Ess}(\mc{F},\mscr{E}),\mscr{E})$--splitting of $G$ in which $H$ is non-elliptic. 
    
    If $I$ is of axial type, we have $I/K_I\cong\Z^k$ where $k\geq 2$ and $K_I\in\mc{F}$. (Recall that, since $K_I$ is root-closed by Condition~$(ii)$, there are no reflections in the action of $I$ on its invariant line.) Choosing any subgroup $E\lhd I$ with $K_I\lhd E$ and $I/E\cong\Z$, we have $E\in{\rm AE}(\mc{F})$ and we obtain an HNN splitting of $I$ with kernel $E$. As in the previous paragraph, we can refine $\Delta'$ by blowing up the $I$--fixed vertex to this HNN splitting, and then collapse all edges not created during the blow-up. We thus obtain a one-edge $({\rm AE}(\mc{F}),\mscr{E})$--splitting of $G$ in which $H$ is not elliptic. Note that this paragraph is relevant only if $H$ preserves an indiscrete line in $T$, and otherwise it can be skipped.

    Finally, suppose that $H$ is non-elliptic in $\Delta'$. Note that $H$ contains elements that are loxodromic in $\Delta'$, as chains in $\mc{F}_{\rm int}$ have bounded length by Condition~$(iii)$ (using \Cref{lem:FK}). Thus, $H$ has a well-defined minimal subtree $M\sq\Delta'$. If the $G$--stabiliser of at least one edge of $M$ lies in $\mc{F}_{\rm int}$, then we can collapse all edges of $\Delta'$ outside the $G$--orbit of this edge, and thus obtain a $(\mc{F}_{\rm int},\mscr{E})$--splitting of $G$ in which $H$ is not elliptic. Otherwise, there exists a vertex $x\in M$ such that $G_x$ is an element of ${\rm Ind}$ of surface type. In this case, we can again refine $\Delta'$ by blowing up the vertex $x$ to a one-edge splitting of $G_x$ over an element of ${\rm Ess}(\mc{F},\mscr{E})$, relative to $\partial G_x$. Collapsing all edges not created during the blow-up, we again obtain a one-edge $({\rm Ess}(\mc{F},\mscr{E}),\mscr{E})$--splitting of $G$ in which $H$ is non elliptic. This proves part~(4) and concludes the proof of \Cref{thm:acc_implies_nice}.
\end{proof}

\section{Special groups}\label{sect:special}

In \Cref{sub:special_main}, we deduce \Cref{corintro} from \Cref{thm:acc_implies_nice}. First though, we need to review some basic facts on convex-cocompact subgroups.

\subsection{Convex-cocompactness}\label{sub:cc}

A \emph{cocompact cubulation} $Q\acts X$ is a proper cocompact action of a discrete group on a CAT(0) cube complex. Given a cocompact cubulation, a subgroup $H\leq Q$ is \emph{convex-cocompact} if there exists an $H$--invariant convex subcomplex $C\sq X$ such that the action $H\acts X$ is cocompact. We refer to \cite[Lemma~3.2]{Fio10a} for equivalent conditions.

In the presence of a (simplicial) splitting of $Q$, convex-cocompactness can be transferred from edge-stabilisers to vertex-stabilisers:

\begin{prop}\label{prop:cc_edge->vertex}
    Let $Q\acts X$ be a cocompact cubulation. If $Q\acts T$ is a splitting such that edge-stabilisers are convex-cocompact in $X$, then vertex-stabilisers are convex-cocompact in $X$.
\end{prop}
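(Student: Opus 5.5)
The plan is to build an explicit convex subcomplex for each vertex-stabiliser by gluing together convex hulls of edge-stabiliser cores and of translates of a fixed basepoint, and then to show it is convex and locally finite modulo the vertex group. Fix a vertex $v\in T$ with stabiliser $Q_v$. Since $Q$ acts cocompactly on $T$ (it is finitely generated, acting properly cocompactly on $X$, and the splitting is minimal), there are finitely many $Q_v$--orbits of edges at $v$ modulo the question of finiteness, so let me first note: $Q_v$ is finitely generated relative to finitely many edge-stabilisers $Q_{e_1},\dots,Q_{e_k}$ incident to $v$ (by \cite[Lemma~1.11]{Guir-Fourier}). Since edge-stabilisers are convex-cocompact, hence finitely generated, $Q_v$ is finitely generated. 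Choose $Q_{e_i}$--invariant convex subcomplexes $C_i\sq X$ on which $Q_{e_i}$ acts cocompactly.

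The key tool I would use is an equivalent characterisation of convex-cocompactness from \cite[Lemma~3.2]{Fio10a}: roughly, $H$ is convex-cocompact iff it is finitely generated and quasi-convex with respect to the combinatorial (or CAT(0)) metric in a suitable uniform sense, e.g.\ the convex hull of an $H$--orbit lies in a bounded neighbourhood of that orbit. So the goal becomes: show the convex hull of $Q_v\cdot x_0$ is at finite Hausdorff distance from $Q_v\cdot x_0$. To this end, build an equivariant map $\phi$ from $T$ to $X$ (send vertices to basepoints, edges $e$ to points of the corresponding $C_e$) and consider the ``tree of spaces'' picture: geodesics in $X$ between points of $Q_v x_0$ project via a coarse map to paths in $T$; any excursion away from $v$ must leave and return through the same edge $e$ at $v$, and then the excursion starts and ends near $C_e$. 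Since $C_e$ is convex, the subgeodesic of the excursion stays in $C_e$, up to the bounded error between $\phi(e)$ and $C_e$. Hence hull points are within bounded distance of $Q_v x_0\cup\bigcup_{e\ni v}C_e$, and each $C_e$ is within bounded distance of $Q_e x_0\sq Q_v x_0$; by finiteness of orbits of edges at $v$ under $Q_v$ (which follows from cocompactness of $Q\acts T$), the bound is uniform.

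Concretely I would argue with hyperplanes: a hyperplane $\mathfrak h$ separating two points of the hull of $Q_vx_0$ must separate two points of $Q_vx_0$; I want to show it is within bounded distance of $Q_vx_0$. Equivalently, use the characterisation that $H$ is convex-cocompact iff there are finitely many $H$--orbits of hyperplanes crossing the hull (or meeting a neighbourhood of the orbit). The main obstacle is exactly the coarse projection step: making rigorous that a geodesic between two points of $Q_vx_0$ whose middle leaves a large neighbourhood of $\bigcup_{e\ni v} C_e\cup Q_v x_0$ must exit and re-enter through a single edge-space. I would handle it by choosing $\phi$ so that the preimages of the complementary components of $v$ in $T$ are separated in $X$ by the coarse subsets $N_R(C_e)$, using that $Q$ acts properly and cocompactly so that ``walls'' $N_R(C_e)$ coarsely separate $X$ (this is the standard Bass--Serre/tree-of-spaces argument: a path in $X$ maps to a coarse path in $T$ passing through each edge it crosses only via points near $\phi(e)$), and then applying convexity of the $C_e$ (via the quasi-convexity of $Q_e$) to pull the geodesic back. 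Finally, with the hull of $Q_vx_0$ at bounded distance from the orbit, proper cocompactness of $Q\acts X$ gives that $Q_v$ acts cocompactly on this hull, which is the required convex subcomplex.
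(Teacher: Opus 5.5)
Your strategy, proving that an orbit $Q_vx_0$ is quasi-convex by a tree-of-spaces excursion argument, is a legitimate alternative to the paper's. As written, however, the passage from quasi-convexity to convex-cocompactness does not go through.

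\begin{itemize}
\item \emph{The metric matters.} Quasi-convexity for the CAT(0) metric does not give cubical convex-cocompactness. For $\langle(1,1)\rangle\leq\Z^2$ acting on the square tiling of $\R^2$, the orbit of the origin lies on a CAT(0) geodesic. Yet the only nonempty invariant convex subcomplex is $\R^2$ itself, on which the action is not cocompact.
\item \emph{Your hyperplane criterion is false.} The same example kills the criterion you offer as an equivalent: only two orbits of hyperplanes cross the hull, and every one of them meets the $1$--neighbourhood of the orbit.
\item \emph{The hull step is unjustified.} ``Hence hull points are within bounded distance'' does not follow from geodesics staying close. You need to run the argument with combinatorial geodesics in $X^{(1)}$, and then invoke Haglund's theorem: in a finite-dimensional CAT(0) cube complex, the combinatorial convex hull of an $\ell^1$--quasi-convex set lies in a uniform neighbourhood of it.
\item \emph{The map goes the wrong way.} An equivariant map $T\to X$ cannot exist, since vertex groups $Q_w$ are infinite and act properly on $X$. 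Use instead a $Q$--equivariant map $\pi\colon X^{(0)}\to T^{(0)}$. It exists because vertex-stabilisers of $X$ are finite and hence fix vertices of $T$.
\end{itemize}

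With these corrections, the step you flag as the main obstacle becomes routine. Since $X$ has finitely many $Q$--orbits of edges and the $C_e$ are chosen equivariantly, there is $R_0$ with $d(x,C_f)\leq R_0$ whenever $[x,y]$ is an edge of $X$ and $f\sq[\pi(x),\pi(y)]$. So each excursion of $\pi\o\gamma$ into the component of $T\setminus\{v\}$ through $e$ begins and ends $R_0$--close to $C_e$. In the $\ell^1$ metric, a geodesic between two points $R$--close to a convex subcomplex $C$ stays $2R$--close to $C$: a hyperplane separating a point of the interval from $C$ must separate one of the endpoints from $C$. Finally, the points $z$ with $\pi(z)=v$, and the subcomplexes $C_e$ with $e\ni v$, are uniformly close to $Q_vx_0$, using finiteness of the relevant orbits.

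The paper avoids coarse geometry altogether. It builds a cocompact cubulation $Y$ having both $X$ and $T$ as restriction quotients, via a cofinite median subalgebra of $X^{(0)}\x T^{(0)}$; this is where the hypothesis on edge-stabilisers is needed. It then identifies $Q_v$ with the stabiliser of a component of $Y$ minus the open carriers of the hyperplanes dual to $T$, which is automatically convex with cocompact stabiliser, and projects to $X$. Your route is more hands-on and needs no median-algebra machinery, but it relies on Haglund's hull theorem. The finite generation of $Q_v$ you establish at the start is never actually used.
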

\begin{proof}
    We start by constructing a cocompact cubulation $Q\acts Y$ having both actions $Q\acts X$ and $Q\acts T$ as \emph{restriction quotients} in the sense of \cite[p.\ 860]{CS11} (also see \cite[Section~2.5.1]{Fio10a} for details); in other words, $X$ and $T$ can both be obtained from $Y$ by collapsing some $Q$--orbits of hyperplanes. Such a complex $Y$ can be obtained by applying \cite[Proposition~7.9]{Fio10e} exactly as in the proof of \cite[Theorem~2.17, (3)$\Ra$(4)]{FLS}. Omitting some details, one considers the diagonal action $Q\acts X\x T$ and finds a $Q$--invariant, $Q$--cofinite median subalgebra $M\sq X^{(0)}\x T^{(0)}$ that is mapped onto $X^{(0)}$ by the factor projection $X\x T\ra X$. Using Chepoi--Roller duality \cite{Chepoi,Roller}, the action $Q\acts M$ is the action on the $0$--skeleton of the required CAT(0) cube complex $Y$.

    Now that we have $Y$, there is by construction a $Q$--invariant set $\mc{W}$ of pairwise-disjoint hyperplanes of $Y$ such that its dual tree is $Q$--equivariantly isomorphic to $T$. Let $Y^{\circ}$ be a copy of $Y$ from which we have removed the interiors of the carriers of the hyperplanes in $\mc{W}$ (that is, all open cubes intersecting elements of $\mc{W}$). The $Q$--stabilisers of the vertices of $T$ are precisely the $Q$--stabilisers of the connected components of $Y^{\circ}$. Each of these components is a convex subcomplex of $Y$ and it is acted upon cocompactly by its stabiliser, since $Q\acts Y$ is cocompact and $Y^{\circ}\sq Y$ is $Q$--invariant. This shows that $Q$--stabilisers of vertices of $T$ are convex-cocompact in $Y$.

    Finally, the equivariant projection $Y\ra X$ takes convex subcomplexes to convex subcomplexes, so any subgroup of $Q$ that is convex-cocompact in $Y$ is also convex-cocompact in $X$.
\end{proof}

Recall from the Introduction that, for a group $G$, the family $\mc{Z}(G)$ of centralisers is defined as
\[ \mc{Z}(G):=\{Z_G(A)\mid A\sq G\}. \]
Equivalently, the elements of $\mc{Z}(G)$ are precisely those subgroups $H\leq G$ that satisfy $Z_GZ_G(H)=H$.

When we speak of convex-cocompactness for subgroups of a right-angled Artin group $A_{\G}$, this is always meant with respect to the standard $A_{\G}$--action on the universal cover of the Salvetti complex. A group $G$ is \emph{special} \cite{HW08} if there exists a convex-cocompact embedding $\iota\colon G\hookrightarrow A_{\G}$ for some right-angled Artin group $A_{\G}$, meaning that $\iota(G)\leq A_{\G}$ is convex-cocompact.

If $G$ is special, then each choice of a convex-cocompact embedding $\iota\colon G\hookrightarrow A_{\G}$ induces a notion of convex-cocompactness for subgroups of $G$: we say that $H\leq G$ is \emph{convex-cocompact} (with respect to $\iota$) if the subgroup $\iota(H)$ is convex-cocompact in $A_{\G}$. Different choices of $\iota$ induce different notions of convex-cocompactness on $G$ (see \cite{FLS}), but all have the following properties.

\begin{lem}\label{lem:cc_basics}
    Let $G\leq A_{\G}$ and $H,K\leq G$ be convex-cocompact.
    \begin{enumerate}
        \item The intersection $H\cap K$ is convex-cocompact.
        \item The normaliser $N_G(H)$ is convex-cocompact and virtually splits as $H\x P$ for some $P\leq G$.
        \item The set $\{H\cap gKg^{-1}\mid g\in G\}$ is finite up to $H$--conjugacy.
        \item All centralisers $Z\in\mc{Z}(G)$ are convex-cocompact and root-closed.
    \end{enumerate}
\end{lem}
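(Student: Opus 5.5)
Most of these are standard facts about convex subcomplexes of the universal cover $X$ of the Salvetti complex of $A_{\G}$ (and hence of any convex-cocompact $G\leq A_{\G}$, working in a $G$--invariant convex subcomplex). The plan is to work in $X$ throughout and reduce each item to a statement about convex subcomplexes and their hyperplanes.

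For part~(1), let $C_H,C_K\sq X$ be convex subcomplexes on which $H$ and $K$ act cocompactly. If $C_H\cap C_K\neq\emptyset$, the intersection is convex and $(H\cap K)$--invariant, and it suffices to show it is $(H\cap K)$--cocompact. For this I would use the standard argument with the gate projection of $C_K$ onto $C_H$ (the bridge), which is a convex subcomplex invariant under $H\cap K$. By local finiteness and cocompactness of the two actions, the $H$--translates of $C_H\cap C_K$ (up to enlarging $C_K$ to a bounded neighbourhood) meet a fixed compact set in finitely many pieces; this gives cocompactness. The alternative route I would invoke if convenient is the characterisation of convex-cocompactness in \cite[Lemma~3.2]{Fio10a}, which I expect to handle intersections directly.

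For part~(3), I would fix compact fundamental domains for $H\acts C_H$ and $K\acts C_K$. Each $H\cap gKg^{-1}$ (with the relevant bridge nonempty) is, up to $H$--conjugacy, determined by a pair of cubes at bounded distance realising the bridge between $C_H$ and $gC_K$. Local finiteness then gives finitely many possibilities. The only delicate point is controlling the bridge length uniformly, which I would get from the fact that the hyperplanes crossing the bridge separate the two subcomplexes.

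Part~(2) I expect to be the main obstacle. The plan is to take the essential core $C_H$ and observe that $N_G(H)$ permutes the $H$--essential hyperplanes. One then considers the parallel set of $C_H$, which splits as a product $C_H'\times P'$ where $P'$ is spanned by hyperplanes transverse to all hyperplanes crossing $C_H'$. This is the RAAG-specific input: the link condition in the Salvetti complex forces such a product decomposition, as in the work on convex subgroups of RAAGs. We have $N_G(H)$ acting cocompactly on this parallel set, and the second factor yields $P$ with a finite-index $H\x P$.

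For part~(4), given $Z=Z_G(A)$, I would first reduce to a finite subset $A$ using chain conditions on convex-cocompact subgroups. Then $Z=\bigcap_{a\in A}Z_G(a)$, and by part~(1) it suffices to treat the centraliser of a single element. For $a$ hyperbolic, $Z_G(a)$ is the normaliser-type subgroup stabilising the parallel set of the axis, which is convex-cocompact by the same product structure as in part~(2). Root-closedness is argued directly: if $g^n\in Z_G(A)$, then $g^n$ commutes with each $a\in A$. In a RAAG, commuting with $g^n$ is equivalent to commuting with $g$, by the uniqueness of roots and the structure of centralisers (Servatius), and this property passes to $G\leq A_{\G}$.
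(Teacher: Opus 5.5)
\textbf{The gap is in part~(3).} It sits exactly at the step you call ``the only delicate point''. There is no uniform bound on the length of the bridge between $C_H$ and $gC_K$. Since $H$ preserves $C_H$ and $K$ preserves $C_K$, the distance $d(C_H,gC_K)$ depends only on the double coset $HgK$, and it is unbounded in general. Take $G=A_{\G}=\Z^2=\langle a,b\rangle$, $H=K=\langle a\rangle$, $C_H=C_K$ the $a$--axis through the base vertex, and $g=b^n$: the bridge has length $n$. The bridge length is the number of hyperplanes separating $C_H$ from $gC_K$, so these hyperplanes measure the distance rather than bound it. Moreover, suppose your scheme worked: a cube of $gC_K$ would lie within bounded distance of a fixed fundamental domain for $H\acts C_H$, giving finitely many cosets $gK$ up to $H$. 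That would prove only finitely many double cosets $HgK$ occur, which is false in this example. Conclusion~(3) holds there only because the infinitely many double cosets $Hb^nK$ all yield the same intersection $\langle a\rangle$. So the intersections cannot be parametrised by the relative position of $C_H$ and $gC_K$.

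\textbf{What is missing is uniform control on the intersection itself.} The paper gets it from \cite[Lemma~7.5]{Fio10e}, proved there for $H=K$ with the same proof in general. It gives an $N$, independent of $g$, such that $H\cap gKg^{-1}$ acts on a convex subcomplex of $C_H$ with at most $N$ orbits of vertices. Then $H$ has only finitely many conjugacy classes of such subgroups, by the claim in the proof of \cite[Lemma~2.9]{Fio10e}. The argument for part~(1), applied to $C_H$ and a large neighbourhood of $gC_K$, does show that $H\cap gKg^{-1}$ acts cocompactly on the gate $\pi_{C_H}(gC_K)$ for each fixed $g$. However, the bound it yields depends on the bridge length, and uniformity in $g$ is precisely the missing ingredient.

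\textbf{The other parts.} For parts~(1), (2) and (4) the paper simply cites \cite[Lemma~2.8, Corollary~2.13(1), Remark~3.30]{Fio10e}. Your outlines there are in the right spirit, although in~(2) the cocompactness of $N_G(H)$ on the parallel set and the group-level splitting are only asserted. Two steps in~(4) are wrong as written:
\begin{enumerate}
\item Descending chains of convex-cocompact subgroups need not stabilise (e.g.\ $\langle a^{2^n}\rangle\le\Z$). The reduction to finite $A$ must instead use the chain condition for centralisers, e.g.\ \cite[Lemma~3.36]{Fio10e} or linearity of $A_{\G}$.
\item $\langle a\rangle$ need not be convex-cocompact (e.g.\ $\langle ab\rangle\le\Z^2$), so part~(2) cannot be applied to it. Convex-cocompactness of $Z_{A_{\G}}(a)$ needs a separate argument, e.g.\ via Servatius' centraliser theorem.
\end{enumerate}
Your root-closedness argument via $Z_{A_{\G}}(g^n)=Z_{A_{\G}}(g)$ is fine.
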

\begin{proof}
     Parts~(1),~(2) and~(4) are, respectively, Lemma~2.8, Corollary~2.13(1) and Remark~3.30 in \cite{Fio10e}. As to part~(3), let $\X_{\G}$ be the universal cover of the Salvetti complex of $A_{\G}$, and let $C\sq\X_{\G}$ be a convex subcomplex on which $H$ acts cocompactly. There is an integer $N$ such that, for each $g\in G$, the intersection $H\cap gKg^{-1}$ acts on a convex subcomplex of $C$ with at most $N$ orbits of vertices: this was shown in \cite[Lemma~7.5]{Fio10e} when $H=K$, and the general case is identical. Now, the group $H$ has only finitely many conjugacy classes of subgroups acting on convex subcomplexes of $C$ with at most $N$ orbits of vertices (see the claim in the proof of \cite[Lemma~2.9]{Fio10e}), and this yields part~(3).
\end{proof}

\begin{rmk}\label{rmk:centraliser_chains}
    If $G$ special, then chains of subgroups in the collection of centralisers $\mc{Z}(G)$ have uniformly bounded length. For instance, this is a consequence of \cite[Lemma~3.36]{Fio10e}.
\end{rmk}

\subsection{Special groups and real trees}\label{sub:special_main}

\Cref{cor:acc_implies_nice} below implies \Cref{corintro} from the Introduction. First, we need one last lemma.

\begin{lem}\label{lem:Ess_Z}
    If $G$ is a special group, then ${\rm Ess}(\mc{Z}(G),\emptyset)\sq\mc{Z}(G)$.
\end{lem}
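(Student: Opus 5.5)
We have to show that every $E\in{\rm Ess}(\mc{Z}(G),\emptyset)$ satisfies $Z_GZ_G(E)=E$. So fix an optimal QH vertex group $Q$ with fibre $F\in\mc{Z}(G)$ in some splitting $G\acts S$ over $\mc{Z}(G)$, with surface $\Sigma$. Fix an essential subgroup $E=F\rtimes\langle\g\rangle\leq Q$, where $\g$ lifts an essential simple closed curve on $\Sigma$. The approach has three steps: show $Q$ is virtually $F\x Q'$; read off the centraliser structure from that product; then compute $Z_GZ_G(E)$.

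\emph{Step 1: structure of $Q$.} Edge groups of $S$ lie in $\mc{Z}(G)$, so they are convex-cocompact by \Cref{lem:cc_basics}(4). By \Cref{prop:cc_edge->vertex}, $Q$ is then convex-cocompact. Since $F\lhd Q$, we have $Q\leq N_G(F)$. By \Cref{lem:cc_basics}(2), $N_G(F)$ virtually splits as $F\x P$. Intersecting with $Q$, we get that $Q$ is virtually $F\x Q'$, where $Q'$ maps with finite index into $\pi_1(\Sigma)$. So $Q'$ is virtually a non-abelian free or surface group. In particular, after passing to a power, $\g^k$ can be taken to lie in $Q'$ and to commute with $F$.

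\emph{Step 2: $E$ is virtually a product.} Write $c:=\g^k\in Q'$. Then $F\x\langle c\rangle$ has finite index in $E$. A special group is torsion-free, and centralisers are root-closed by \Cref{lem:cc_basics}(4). So it suffices to show that $H:=Z_GZ_G(E)$ satisfies two things: $H$ is commensurable with $E$, and $H\leq E$ follows from root-closure of the relevant pieces. Concretely, I will compute $Z_G(E)$.

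\emph{Step 3: computing $Z_G(E)$.} First, $Z_G(E)\supseteq Z_G(F)\cap Z_G(c)$, and in fact equality holds up to finite index. Next I claim $Z_G(E)\cap Q$ is virtually $Z(F)\x\langle c\rangle$. This holds because the centraliser of $c$ in the surface-type factor $Q'$ is cyclic, as $c$ is primitive-up-to-roots and $\pi_1(\Sigma)$ has cyclic centralisers.

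The key point is that $Z_G(E)$ contains $c$, and the bicentraliser $Z_G(\{c\}\cup Z_G(E))$ should recover $F\rtimes\langle\g\rangle$. For this I would show the following: any $h\in Z_GZ_G(E)$ commutes with $Z_G(F)$, hence lies in $Z_GZ_G(F)=F$ modulo the centraliser of $c$. More carefully, $h$ normalises $F$, because $F=Z_GZ_G(F)$ and $h$ centralises $Z_G(E)\supseteq Z_G(F)\cap Z_G(c)$. Then I would use the QH structure: $h$ must fix the vertex $x$. Otherwise $h$ would lie in an edge group, whose image in $\pi_1(\Sigma)$ is peripheral, and a peripheral element cannot commute with the essential non-peripheral curve $\g$. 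Once $h$ fixes $x$, its image in $\pi_1(\Sigma)$ centralises $\g$, so it lies in the maximal cyclic subgroup $\langle\g\rangle$. Here I use that $\g$ is simple and hence primitive, up to orientation issues handled by root-closure. This gives $h\in F\langle\g\rangle=E$.

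The main obstacle is Step 3, specifically proving that elements of $Z_GZ_G(E)$ fix the QH vertex $x$ and normalise $F$. Controlling elements of $G$ outside $Q$ will require the convex-cocompact product decomposition of $N_G(F)$, and an argument that $Z_G(E)$ contains an element acting loxodromically in $S$ with axis through $x$, or else that $\langle c\rangle$ fixes only $x$. I would try the latter first. Because $\g$ is non-peripheral, $c$ fixes no edge at $x$, so ${\rm Fix}(c)=\{x\}$. Any $h$ commuting with $c$ then preserves ${\rm Fix}(c)$ and so fixes $x$. It would therefore suffice to show that $Z_GZ_G(E)$ centralises $c$ up to finite index. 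I would get this from $c$ being virtually central in $E$, together with the fact that the bicentraliser of a virtual product with cyclic factor $\langle c\rangle$ centralises $c$.
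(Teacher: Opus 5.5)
\textbf{There is a genuine gap in Step~3, though the geometric core of your plan matches the paper.} That core is sound: take $c\in E$ commuting with $F$ and projecting to a nontrivial power $\bar\g^k$. Since $\bar\g^k$ is non-peripheral, $\Fix_S(c)=\{x\}$, so $Z_G(c)\le Q$. Cyclicity of centralisers in $\pi_1(\Sigma)$ together with primitivity of $\bar\g$ then gives $Z_G(c)\le E$.

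\textbf{The unproved step.} To use this for $Z_GZ_G(E)$ you need $c\in Z_G(E)$, i.e.\ that $c$ commutes with all of $E$ and not just with the finite-index subgroup $E_0:=F\langle c\rangle$. This is never established.
\begin{itemize}
\item In Step~1, the splitting of $N_G(F)$ only gives $\g^k=fc$ with $f\in F$ and $c\in P\le Z_G(F)$. The power $\g^k$ itself need not commute with $F$. For example, if $E=F\times\langle t\rangle$ with $F$ non-abelian free, the lift $\g=at$ with $1\neq a\in F$ has no power commuting with $F$.
\item For $c=f^{-1}\g^k$ you only get $\g c\g^{-1}\in Z(F)\,c$.
\item The remedy in your last paragraph runs the wrong way. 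From $E_0\le E$ you get $Z_G(E)\le Z_G(E_0)$, hence $Z_GZ_G(E)\supseteq Z_GZ_G(E_0)$. So knowing that the bicentraliser of the virtual product $E_0$ centralises $c$ says nothing about $Z_GZ_G(E)$.
\item Virtual centrality alone is genuinely insufficient. In the Klein bottle group $G=E=\langle f,\g\mid \g f\g^{-1}=f^{-1}\rangle$ with $F=\langle f\rangle$, the element $c=f\g^2$ is central in $F\times\langle c\rangle$. But $Z_GZ_G(E)=E$ does not centralise it, since $\g c\g^{-1}=f^{-1}\g^2$.
\end{itemize}
So some special-group input is needed here.

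\textbf{The fix.} Apply root-closure of centralisers, \Cref{lem:cc_basics}(4), to $Z_G(c)$ itself. $Z_G(c)$ contains $E_0$, which is normal of index $k$ in $E$. So every $e\in E$ has $e^k\in Z_G(c)$, hence $e\in Z_G(c)$, i.e.\ $E\le Z_G(c)$.

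Combined with your fixed-point argument this gives $E=Z_G(c)\in\mc{Z}(G)$ directly, which is exactly the paper's proof. No bicentraliser computation is needed. The following can then be dropped:
\begin{itemize}
\item the convex-cocompactness of $Q$ in Step~1, which is unused;
\item the claims in the middle of Step~3 that $h$ normalises $F$, or would otherwise lie in an edge group. These are non sequiturs. An element not fixing $x$ is not in $Q$, so its image in $\pi_1(\Sigma)$ is meaningless.
\end{itemize}
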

\begin{proof}
    Let $G\acts S$ be a splitting with a vertex $x\in S$ whose stabiliser $G_x$ is an (optimal) quadratically hanging subgroup with fibre $F\in\mc{Z}(G)$. Let $\gamma\in G_x$ be an element whose projection $\overline\gamma\in G_x/F$ represents an essential simple closed curve on the associated surface. We need to show that the subgroup $E:=F\rtimes\langle\gamma\rangle \leq G_x$ lies in $\mc{Z}(G)$.
    
    Since $F$ lies in $\mc{Z}(G)$, it is convex-cocompact in $G$ and so \Cref{lem:cc_basics}(2) guarantees that $N_G(F)$ virtually splits as $F\x P$ for a subgroup $P\leq G$. In particular, $E$ virtually splits as $F\x\langle p\rangle$ for some element $p\in P$ projecting to a power of $\overline\g\in G_x/F$. Note that, since $\overline\gamma$ is essential, the vertex $x$ is the only fixed point of $p$ in the tree $S$. This implies that $Z_G(p)\leq G_x$ and hence $Z_G(p)\leq E$.
    
    In conclusion, we have $F\lhd Z_G(p)\leq E$ and $Z_G(p)/F=\langle\overline\g^k\rangle$ for some $k\geq 1$. It follows that $Z_G(p)$ is a finite-index subgroup of $E$ and, since $Z_G(p)$ is root-closed by \Cref{lem:cc_basics}(4), we must have $E=Z_G(p)$. This shows that $E\in\mc{Z}(G)$ as desired. 
\end{proof}

Recall that $\mc{VP}(G)$ denotes the family of subgroups of $G$ that virtually split as the direct product of two infinite groups.

\begin{cor}\label{cor:acc_implies_nice}
    Let $G$ be a special group. Suppose that $G$ is accessible over $\mc{Z}(G)$. Let $G\acts T$ be a minimal $\R$--tree with arc-stabilisers in $\mc{Z}(G)$. Let $\mscr{E}$ be the family of elliptic subgroups of $G$.
    \begin{enumerate}
        \item For every $p\in T$, the stabiliser $G_p$ is convex-cocompact.
        \item There are only finitely many $G$--conjugacy classes of point-stabilisers of $T$.
        \item Let a subgroup $H\leq G$ be non-elliptic in $T$. Suppose either that $\mc{VP}(G)\sq\mscr{E}$, or that $H$ does not preserve any line in $T$. Then $H$ is non-elliptic in a $(\mc{Z}(G),\mscr{E})$--splitting of $G$.
        \item For every point $p\in T$, one of the following holds:
            \begin{enumerate}
                \item $G_p$ is trivial;
                \item $G_p\in\mc{Z}(G)$ and $G_p\cong\Z$;
                \item $G_p$ is contained in a subgroup in $\mc{VP}(G)$;
                \item $G_p$ is a vertex group in a $(\mc{Z}(G)\cup{\rm Per}(\mc{Z}(G),\mscr{E}),\mscr{E})$--splitting of $G$.
            \end{enumerate}
    \end{enumerate}
\end{cor}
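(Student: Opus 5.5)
Our plan is to verify the hypotheses of \Cref{thm:acc_implies_nice} for $G\acts T$, then translate its conclusions using \Cref{lem:cc_basics}, \Cref{prop:cc_edge->vertex} and \Cref{rmk:again_cc_near_QH}.

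\textbf{Checking the hypotheses.} Special groups are finitely presented and torsion-free. The arc-stabilisers lie in $\mc{Z}(G)$. Since $Z_G(A)\cap Z_G(B)=Z_G(A\cup B)$, we get $\mc{F}_{\rm int}\sq\mc{Z}(G)$, and \Cref{lem:Ess_Z} gives ${\rm Ess}(\mc{F},\emptyset)\sq\mc{Z}(G)$. Hence accessibility over $\mc{Z}(G)$ implies Condition~$(i)$. By \Cref{lem:cc_basics}(4), centralisers are convex-cocompact (hence finitely generated) and root-closed, which is Condition~$(ii)$. \Cref{rmk:centraliser_chains} gives Condition~$(iii)$.

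\textbf{Part (1).} Let $p\in T$. If $G_p\in\mc{F}$, it is a centraliser and we are done. If $G_p$ is a fake full peripheral, or more generally if $G_p\notin\mc{F}$, we use \Cref{rmk:again_cc_near_QH}: a subgroup of index $\le2$ in $G_p$ equals $W_1\cap W_2$, where each $W_i$ is a vertex group of a splitting of a subgroup $G_0$ of index $\le2$. The edge groups of these splittings are intersections of $G_0$ with elements of $\mc{F}_{\rm int}\cup{\rm Ess}(\mc{F},\mscr{E})$. Several facts are needed here:
\begin{itemize}
\item ${\rm Ess}(\mc{F},\mscr{E})\sq{\rm Ess}(\mc{F},\emptyset)\sq\mc{Z}(G)$, since relative QH subgroups are in particular QH subgroups relative to $\emptyset$.
\item $G_0$ is convex-cocompact, being of finite index.
\item The edge groups are therefore convex-cocompact by \Cref{lem:cc_basics}(1).
\end{itemize}
Then \Cref{prop:cc_edge->vertex}, applied to $G_0$ acting on its cubulation, shows each $W_i$ is convex-cocompact. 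By \Cref{lem:cc_basics}(1), so is $W_1\cap W_2$. Convex-cocompactness passes to finite-index overgroups, so $G_p$ is convex-cocompact.

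\textbf{Part (2).} By \Cref{thm:acc_implies_nice}(2), only finitely many orbits of points have $G_p\notin\mc{F}$. The remaining stabilisers are arc-stabilisers, or non-branch point stabilisers, which are contained in stable arc-stabilisers. For these, \Cref{thm:acc_implies_nice}(1) gives finitely many classes $B$ of stable arc-stabilisers. For $p$ interior to an arc $\beta$ with $G_\beta\le G_p\in\mc{F}$, we have $G_p=G_{[p,q]}$, so by BF-stability it is contained in some conjugate of a $B$ and is an intersection of stabilisers. Concretely, each $G_p\in\mc{F}$ equals $B\cap gB'g^{-1}$ for stable arc-stabilisers $B,B'$ meeting at $p$, or $B$ itself. \Cref{lem:cc_basics}(3) then gives finitely many classes. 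This step is the main obstacle: I expect some care in expressing every arc-stabiliser as such a pairwise intersection, possibly using the finiteness of branch-point orbits to reduce to finitely many directions.

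\textbf{Parts (3) and (4).} For part~(3), apply \Cref{thm:acc_implies_nice}(4); its splitting is over $\mc{F}_{\rm int}\cup{\rm Ess}(\mc{F},\mscr{E})\cup{\rm AE}(\mc{F})\sq\mc{Z}(G)\cup{\rm AE}(\mc{F})$. If $H$ preserves no line, the ${\rm AE}$ term is absent. If $\mc{VP}(G)\sq\mscr{E}$, the axial case cannot occur: an axial $I$ has $I/K_I\cong\Z^k$ with $k\ge2$, and via \Cref{lem:cc_basics}(2) $I$ is virtually $K_I\times\Z^k$, which lies in $\mc{VP}$ yet is non-elliptic. Hence the ${\rm AE}$ edges never appear.

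For part~(4), take $G_p\in\mc{F}$. Then $G_p$ is trivial, or $\Z$, or a centraliser that is not virtually $\Z$. In the last case we argue as follows. If $G_p$ is nontrivial, it is an arc-stabiliser of $T$ and is commuting-rich: it is virtually $G_p$ times its centre, or contained in $Z_G(G_p)\times G_p$-type products. I expect to show via \Cref{lem:cc_basics}(2), applied to $N_G(G_p)$, that $G_p$ lies in a virtual product unless it is cyclic. If $G_p$ is fake peripheral or axial-related, it lies in $N_G(K_I)$, again a virtual product by \Cref{lem:cc_basics}(2), or $K_I$ is trivial and $G_p\cong\Z$. Otherwise, \Cref{thm:acc_implies_nice}(3b) gives case~(d).
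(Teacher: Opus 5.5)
Your verification of the hypotheses and your arguments for parts~(1) and~(3) are essentially the paper's. For~(3), the paper avoids reopening the proof of \Cref{thm:acc_implies_nice}. It shows directly that if $\mc{VP}(G)\sq\mscr{E}$, then no line $\alpha\sq T$ is acted on indiscretely by its stabiliser $G_\alpha$: the kernel $K_\alpha$ is an intersection of arc-stabilisers, hence a centraliser, and \Cref{lem:cc_basics}(2) makes $G_\alpha$ virtually $K_\alpha\times\Z^k$ with $k\geq 2$. Your argument via axial components of the approximations reaches the same conclusion. However, parts~(2) and~(4) have genuine gaps, exactly at the steps you leave as expectations.

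\textbf{Part~(2).} Your formula ``$G_p=B\cap gB'g^{-1}$ with $B,B'$ stabilisers of stable arcs meeting at $p$'' is not justified and need not hold. BF-stability only provides a stable sub-arc somewhere inside each arc, not one based at $p$, and $G_p$ need not fix arcs in two different directions at $p$. Finiteness of branch-point orbits is not the relevant tool; bounded chain length is. Given any arc $[x,y]$, the stabilisers $G_{[x',y]}$, for $x'\in(x,y)$ tending to $x$, form a monotone family in $\mc{Z}(G)$. They are therefore eventually constant, and their intersection is $G_{[x,y]}$. Doing the same at $y$ gives $x',y'\in(x,y)$ with $G_{[x',y']}=G_{[x,y]}$. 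Now choose stable arcs $\beta_x\sq[x,x']$ and $\beta_y\sq[y',y]$. The convex hull of $\beta_x\cup\beta_y$ contains $[x',y']$, so $G_{[x,y]}=G_{\beta_x}\cap G_{\beta_y}$. Only then do \Cref{thm:acc_implies_nice}(1) and \Cref{lem:cc_basics}(3) give finitely many conjugacy classes of arc-stabilisers.

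\textbf{Part~(4).} Let $Z=G_p$ be a nontrivial centraliser. Applying \Cref{lem:cc_basics}(2) to $N_G(Z)$ only tells you that either $N_G(Z)\in\mc{VP}(G)$, or $Z$ has finite index in $N_G(Z)$, whence $N_G(Z)=Z$ by root-closure. In the second case nothing in your sketch rules out a self-normalising non-abelian $Z$; the ``commuting-rich'' heuristic is not an argument. The missing idea is the double-centraliser identity $Z=Z_GZ_G(Z)$. If $N_G(Z)=Z$, then $Z_G(Z)\leq Z$ is the centre $C$ of $Z$, so $Z=Z_G(C)\leq N_G(C)$. Since $C$ is itself a centraliser, \Cref{lem:cc_basics}(2) applied to $C$ gives either $Z\leq N_G(C)\in\mc{VP}(G)$, or $Z=C$. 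In the latter case $Z$ is free abelian, hence either $\Z$ or in $\mc{VP}(G)$.

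For fake peripherals there are two further points. First, \Cref{lem:cc_basics}(2) yields a virtual product $N_G(K_I)$ only because $K_I\neq\{1\}$ has infinite index in $N_G(K_I)$, which holds as $G_p/K_I\cong\Z$. Second, when $K_I=\{1\}$, case~(4b) requires $G_p\in\mc{Z}(G)$, not merely $G_p\cong\Z$. This again follows from the dichotomy above applied to $Z_G(G_p)$, together with root-closure of point-stabilisers.
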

\begin{proof}
    Since ${\rm Ess}(\mc{Z}(G),\emptyset)\sq\mc{Z}(G)$ by \Cref{lem:Ess_Z}, we have that the action $G\acts T$ satisfies the hypotheses of \Cref{thm:acc_implies_nice}. Indeed, Condition~$(i)$ holds because $G$ is accessible over $\mc{Z}(G)$, while Conditions~$(ii)$ and~$(iii)$ follow from \Cref{lem:cc_basics}(4) and \Cref{rmk:centraliser_chains}.
    
    Now, part~(3) of the corollary follows from \Cref{thm:acc_implies_nice}(4) if we can show that there are no indiscrete lines in $T$ when the elements of $\mc{VP}(G)$ are elliptic. Thus, consider a line $\alpha\sq T$, let $G_{\alpha}$ be the subgroup of $G$ preserving it, and suppose that the action $G_{\alpha}\acts\alpha$ is indiscrete. The kernel $K_{\alpha}$ of the action $G_{\alpha}\acts\alpha$ is an intersection of arc-stabilisers of $T$ and so it lies in $\mc{Z}(G)$. Thus, $K_{\alpha}$ is convex-cocompact and \Cref{lem:cc_basics}(2) implies that $G_{\alpha}$ virtually splits as $K_{\alpha}\x A$ with $A$ free abelian. The rank of $A$ is $\geq 2$ because of the indiscreteness of $\alpha$. Thus, $G_{\alpha}$ lies in $\mc{VP}(G)$ and is non-elliptic in $T$, as desired.
    
    Part~(1) of the corollary follows from \Cref{thm:acc_implies_nice}(3b) via the consequence outlined in \Cref{rmk:again_cc_near_QH}. Indeed, if $p\in T$ and $G_p\not\in\mc{Z}(G)$, then the description of $G_p$ given in the remark guarantees that $G_p$ is convex-cocompact, invoking \Cref{prop:cc_edge->vertex} and using that ${\rm Ess}(\mc{Z}(G),\emptyset)\sq\mc{Z}(G)$. (We need this roundabout argument because the elements of ${\rm Per}(\mc{Z}(G),\mscr{E})$ are not all convex-cocompact in general.) 

    Regarding part~(2) of the corollary, \Cref{thm:acc_implies_nice}(2) shows that there are only finitely many conjugacy classes of point-stabilisers of $T$ that are not arc-stabilisers, and so it suffices to show that arc-stabilisers of $T$ also fall into finitely many conjugacy classes. Consider an arc $[x,y]\sq T$. Since chains in $\mc{Z}(G)$ have bounded length, there exist two points $x',y'\in(x,y)$ such that the arcs $[x,y]$ and $[x',y']$ have the same $G$--stabiliser (with the points $x,x',y',y$ so aligned). Choosing stable arcs $\beta_x\sq[x,x']$ and $\beta_y\sq[y',y]$, we see that the stabiliser of $[x,y]$ coincides with the intersection of the stabilisers of $\beta_x$ and $\beta_y$. Now, there are only finitely many conjugacy classes of stabilisers of stable arcs, by \Cref{thm:acc_implies_nice}(1), and so \Cref{lem:cc_basics}(3) implies that there are only finitely many conjugacy classes of (general) arc-stabilisers, as we wanted.
    
    Finally, part~(4) of the corollary follows from \Cref{thm:acc_implies_nice}(3b), provided that we show that nontrivial elements of $\mc{Z}(G)\cup{\rm Per}(\mc{Z}(G),\mscr{E})$ fall into cases (4b) and (4c) of the corollary. For this, consider first a nontrivial centraliser $Z\in\mc{Z}(G)$ and recall that $Z=Z_GZ_G(Z)$. Since $Z$ is convex-cocompact, \Cref{lem:cc_basics}(2) shows that either $N_G(Z)=Z$ or $Z\leq N_G(Z)\in\mc{VP}(G)$. In the former case, we have $Z_G(Z)\leq Z$ and hence $Z=Z_G(C)$ where $C$ is the centre of $Z$. Applying \Cref{lem:cc_basics}(2) to $C$, we conclude that either $Z\in\mc{VP}(G)$ or $Z=C$. Finally, in the latter case, either $Z\cong\Z$ or $Z\in\mc{VP}(G)$. Consider now an element $P\in{\rm Per}(\mc{Z}(G),\mscr{E})$ and let $F\in\mc{Z}(G)$ be the fibre of the corresponding QH subgroup. We have $P\leq N_G(F)$ and so, if $F\neq\{1\}$, then another application of \Cref{lem:cc_basics}(2) shows that either $N_G(F)\in\mc{VP}(G)$ or $N_G(F)=F\in\mc{Z}(G)$. Finally, if $F=\{1\}$, then $P\cong\Z$ and hence $P\leq Z_G(P)\in\mc{Z}(G)$. In conclusion, all nontrivial elements of $\mc{Z}(G)\cup{\rm Per}(\mc{Z}(G),\mscr{E})$ are either cyclic centralisers or subgroups of elements of $\mc{VP}(G)$, as required. 
    
    This concludes the proof of the corollary.
\end{proof}

\bibliography{./mybib}
\bibliographystyle{alpha}

\end{document}